\DeclareMathOperator\supp{supp}
\providecommand{\U}[1]{\protect\rule{.1in}{.1in}}
\newtheorem{theorem}{Theorem}[section]
\newtheorem{lemma}[theorem]{Lemma}
\newtheorem{corollary}[theorem]{Corollary}
\newtheorem{proposition}[theorem]{Proposition}
\newtheorem{remark}[theorem]{Remark}
\numberwithin{equation}{section}
\newcommand{\mbb}{\mathbb}
\newcommand{\mc}{\mathcal}
\newcommand{\ball}[2]{\mathbb{B}_{{#1}}({#2})}
\newcommand{\cball}[1]{\mathbb{B}_{{#1}}}
\newcommand{\hball}[2]{\mathbb{B}^{+}_{{#1}}({#2})}
\newcommand{\chball}[1]{\mathbb{B}^{+}_{{#1}}}
\newcommand{\lball}[1]{\mathbf{B}_{{#1}}}
\begin{document}

\title[]{Boundary quantitative unique continuation for solutions of elliptic equations}
\author{Jack Dalberg and Jiuyi Zhu}
\address{Department of Mathematics\\
Louisiana State University\\
Baton Rouge, LA 70803, USA\\
Email: jdalbe1@lsu.edu, zhu@math.lsu.edu }
\subjclass[2010]{35A02, 35J15, 	35J25 .} \keywords {Boundary doubling inequality, unique continuation, Carleman estimates}
\begin{abstract}
    We study the quantitative unique continuation on the boundary for solutions of elliptic equations with Neumann boundary conditions for bounded potentials and boundary potentials on compact manifolds with boundary. The boundary doubling inequality is derived from  the combination of  local Carleman estimates and global Carleman estimates. Some special attentions are paid to overcome the regularity issues arising from this boundary value problem.
\end{abstract}
\maketitle
\section{Introduction}
Let  $(\mathcal{M},g)$ be a smooth compact $n$-dimensional Riemannian manifold with boundary.  We consider solutions to the following elliptic equations with the Neumann boundary conditions
\begin{equation}\label{bound PDE}
     \begin{cases} 
      \Delta_{g} u = H(x)u, \quad & x\in \mathcal{M}, \\
      \frac{\partial u}{\partial \nu} = h(x) u, & x\in \partial\mathcal{M},
   \end{cases}
\end{equation}
where $\Delta_{g}=\sum\limits_{i,j=n}g^{-\frac{1}{2}} \partial_i(g^{\frac{1}{2}} g^{ij}\frac{\partial }{\partial x_j})$ is the Laplace-Beltrami operator, $\nu$ is a unit outer normal, $H \in L^{\infty}(\mc{M})$ and $h \in L^{\infty}(\partial \mc{M})$. The aim of this paper is to obtain a quantitative boundary doubling inequality for solutions to \eqref{bound PDE}, which implies the vanishing order of solutions on the boundary $\partial \mc{M}$. The vanishing order of a function at some point is defined to be the highest order derivative such that all lower order derivatives at the point vanish. The doubling inequality and vanishing order describe the quantitative behavior of strong unique continuation property. Strong unique continuation property states that 
a solution vanishes globally 
if the solution  vanishes of infinite order at a
point. See e.g. \cite{JK85}, \cite{KT01} and references therein. We know that all zeros of nontrivial solutions of second order linear
equations with bounded potentials and boundary potentials on smooth compact Riemannian manifolds or its boundary are of finite order.
If the strong unique continuation property holds for the solutions and solutions are not trivial, then the vanishing order of solutions depends on the potential functions and
coefficient functions appearing in the equations. It is interesting to characterize the vanishing
order by the potential functions. We call the research on the doubling inequalities and vanishing order as the study of quantitative unique continuation.

There is an extensive literature on the study of quantitative unique continuation  for solutions of elliptic equations in the interior of manifold. Let us first briefly review some literature. Let $\phi_\lambda$ be  the Laplace eigenfunction  on the boundaryless compact manifold $\tilde{\mathcal{M}}$,
\begin{align}
 -\Delta_{g} \phi_\lambda = \lambda\phi_\lambda, \quad & x\in \tilde{\mathcal{M}}.
 \label{eigen-1}
\end{align}
Donnelly and Fefferman in the seminal paper \cite{DF88} obtained the following celebrated doubling inequality 
\begin{align}
\|\phi_\lambda\|_{L^{2}(\mathbb B_{2r}(x))}\leq e^{C\sqrt{\lambda}} \|\phi_\lambda\|_{L^{2}(\mathbb B_{r}(x))}
\label{doub-df}
\end{align}
for any $x\in \tilde{\mathcal{M}}$, where the square root of $\lambda$ in (\ref{doub-df}) is sharp and $C$ depends on the $\tilde{\mathcal{M}}$. This sharpness of $\sqrt{\lambda}$ in (\ref{doub-df}) can be seen from spherical harmonics if $\tilde{\mathcal{M}}=\mathbb S^{n-1}$. The doubling inequality (\ref{doub-df}) implies that  the maximal vanishing order of $\phi_\lambda$ is at most $C\sqrt{\lambda}$. The doubling inequality  (\ref{doub-df}) also plays a very important role in the study of the measure of nodal sets for eigenfunctions $\phi_\lambda$, see e.g. \cite{DF88}, \cite{Lin91}, \cite{L18}.

To study the role of potential functions in quantitative unique continuation, Kukavica initiated the study of  the vanishing order of solutions for Schr\"odinger equation in \cite{K98},
\begin{align}
    \triangle u=H(x)u.
    \label{schro-1}
\end{align}
If $H(x)\in C^1$, the upper bound of vanishing order is shown to be less than $C(1+\|H\|_{C^1})$. Recently, the sharp
vanishing order for solutions of (\ref{schro-1}) was shown to be less than $C(1+ \|H\|^{\frac{1}{2}}_{C^1})$  independently in \cite{B12} and \cite{Z16} by different methods.
If $H(x)\in L^\infty$, the phenomenon of quantitative unique continuation becomes delicate. Bourgain and Kenig \cite{BK05}  studied the model (\ref{schro-1}) motivated
by the work on Anderson localization for the Bernoulli model. Bourgain and Kenig established
 that the order of vanishing for
solutions is less than $C(1+ \|H\|^{\frac{2}{3}}_{L^\infty})$. Especially, Kenig in \cite{K07} also pointed out that the exponent $\frac{2}{3}$ power in the bound $C(1+ \|H\|^{\frac{2}{3}}_{L^\infty})$ is sharp for complex-valued $H(x)$ based on Meshkov’s example in \cite{M92}.
Davey in \cite{D14} and Bakri in \cite{B13} generalized the quantitative unique continuation result of solutions to more
general elliptic equations of the form 
\begin{align}\label{HHH}
    -\triangle u+H_1(x)\cdot \nabla u+ H(x)u=0
\end{align} 
for possibly 
complex-valued bounded potential functions $H_1(x)$  and $H(x)$. It was shown that
the vanishing order for the solutions is less than $ C(1 + \|H_1\|^2_{L^\infty}+ \|H\|^\frac{2}{3}_{L^\infty} )$. The square power in the term $\|H_1\|^2_{L^\infty}$ was shown be sharp for complex-valued $H_1$ in \cite{D14}.

Based on Donnelly and Fefferman’s work on the vanishing order of eigenfunctions in (\ref{doub-df}), it was asked if the order of vanishing can be reduced to $C(1 + \|H\|_{L^\infty}^{\frac{1}{2}})$ for real-valued $u$ and
$H$ for the solutions in  (\ref{schro-1}) in \cite{BK05}. It is equivalent to a quantitative form of Landis’ conjecture in the
real-valued setting. In the late 1960s, E.M. Landis conjectured that the bounded solution $u$
to $\triangle u- H(x)u = 0$ in $\mathbb R^n$
is trivial if $|u|\leq e^{-C|x|^{1+\varepsilon}}$ for any $\varepsilon>0$, where $H(x)$ is a bounded function. The Landis' conjecture was settled recently in \cite{LMNN20} in the plane. See also \cite{KSW15} for the answer of this conjecture in $\mathbb R^2$ for nonnegative real-valued potential $H$.

As the strong unique continuation property holds on $\partial\mathcal{M}$ for the solutions in (\ref{bound PDE}), we are interested in the role of $H(x)$ and $h(x)$ on the vanishing order of $u$ on $\partial\mathcal{M}$. If $H(x)\in C^1$ and $h(x)\in C^1$, following the arguments in  \cite{Z21}, we could show the doubling inequality 
\begin{align}\label{differe-1}
     \|u\|_{L^{2}(\lball{2r}(x))} \leq e^{C(1+\|H\|^{\frac{1}{2}}_{C^1} + \|h\|_{C^1})}\|u\|_{L^{2}(\lball{r}(x))} 
\end{align}
for  any $\lball{2r}(x) \subset \partial\mathcal{M}$. Due to the delicate role of the $L^\infty$ norm that plays in the quantitative unique continuation for  the Laplace operator in (\ref{schro-1}), we aim to explore the role of $h(x)$ on the boundary quantitative unique continuation for (\ref{bound PDE}).

As the study of  (\ref{schro-1}) is motivated by the study of Laplace eigenvalue problem (\ref{eigen-1}), the study of the Neumann boundary problem is inspired by the study of Steklov eigenvalue problem
\begin{equation}\label{stek}
     \begin{cases} 
      \Delta_{g} \phi_\lambda = 0, \quad & x\in \mathcal{M}, \\
      \frac{\partial \phi_\lambda}{\partial \nu} = \lambda\phi_\lambda, & x\in \partial\mathcal{M}.
   \end{cases}
\end{equation}
The vanishing order and doubling inequalities have been studied in e.g. \cite{BL15}, \cite{Z15}, \cite{R17}. The boundary doubling inequality for Steklov eigenfunction was shown as
\begin{align}
     \|\phi_\lambda\|_{L^{2}(\lball{2r}(x))} \leq e^{C\lambda}\|\phi_\lambda\|_{L^{2}(\lball{r}(x))},
\end{align}
which implies that $\phi_\lambda$ vanishes at most $C\lambda$ on $\partial\mathcal{M}$.

Generally speaking, two principle methods are used to study  quantitative unique continuation property. One method is the Carleman estimates. Carleman estimates are weighted integral estimates, see e.g. \cite{DF88},\cite{K07}, and \cite{Z15}. The second method is called frequency functions which are given in the term of quotient. The  frequency function has deep geometric perspectives and describes the degree of leading polynomials if the function is locally expanded as the polynomial functions, see e.g. \cite{GL86},\cite{K98},\cite{Z16}. Both methods fail to distinguish between real-valued functions and complex-valued functions. We will apply Carleman estimates to obtain the quantitative unique continuation results. Hence, the potential function $H$ and $h$ in (\ref{bound PDE}) can be complex-valued functions.  There is no effective way to control the boundary term with the $L^\infty$ norm in the Carleman estimates. To overcome the difficulty and obtain the desirable Carleman estimates, we split the equations into the sum of two elliptic equations. We obtain two types of Carleman estimates for these two equations and use the one with  strong Carleman parameter $\tau$ to absorb the boundary term. To obtain the boundary doubling inequality, we also need the $H^2$ estimates for some interpolation arguments. See (\ref{interp ineq}) in Section 3. However, based on the assumption of $H(x)$ and $h(x)$ in (\ref{bound PDE}), solutions $u$ are at most in $H^{\frac{3}{2}, 2}$ or $W^{1, p}$ with $1<p<\infty$. Due to the lack of $H^2$ regularity, the novelty is that we employ a lifting argument and reduce the equation (\ref{bound PDE}) to some new elliptic equation with zero Neumann boundary conditions. Hence, we are able to obtain the quantitative $H^2$ estimates for the solutions of the new elliptic equations. We establish the following quantitative doubling inequality on the boundary for the solutions in (\ref{bound PDE}).



\begin{theorem}\label{thm 1}
Let $u$ be a solution to \eqref{bound PDE}. There exist a positive constant $C$ depending only on the manifold $\mc{M}$ such that
\begin{equation}
\label{dou-th}
    \|u\|_{L^{\infty}(\lball{2r}(x))} \leq e^{C(1+\|H\|^{\frac{2}{3}}_{L^{\infty}} + \|h\|^{2}_{L^{\infty}})}\|u\|_{L^{\infty}(\lball{r}(x))}
\end{equation}
for  any $\lball{2r}(x) \subset \partial \mc{M}$.
\end{theorem}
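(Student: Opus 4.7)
The plan is to derive a three-ball (propagation of smallness) inequality on the flattened half-ball via Carleman estimates, iterate it to obtain \eqref{dou-th}, and handle the low regularity of $u$ through the lifting mentioned in the introduction. After localizing near a boundary point $x_0$ and choosing boundary normal coordinates that flatten $\partial\mathcal{M}$ to $\{x_n=0\}$ with $\nu = -\partial_n$, the operator $\Delta_g$ becomes a second-order elliptic operator with smooth coefficients, and the boundary condition reads $\partial_n u = -h(x')u$ on $\{x_n=0\}$. A rescaling $x\mapsto rx$ reduces \eqref{dou-th} to scale $r=1$, so it suffices to prove a $\tfrac12$--$1$--$2$ three-ball inequality on the flattened half-ball with constant of the same exponential form, and then iterate the resulting smallness propagation along chains of overlapping half-balls covering $\lball{r}(x)\subset\partial\mathcal{M}$.

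The key analytic input will be two Carleman estimates with a common radial weight $\phi(x)=-\log|x|+\psi(x)$ chosen so that $\partial_n\phi\le -c<0$ on $\{x_n=0\}$: a local estimate producing the standard $\tau^3$-bulk gain plus a boundary trace term controlling $\tau\int_{\{x_n=0\}} e^{2\tau\phi}|\partial_n u|^2$, and a global estimate whose stronger $\tau$-power on the boundary trace supplies enough room to absorb $\|h\|_{L^\infty}^2 \int_{\{x_n=0\}} e^{2\tau\phi}|u|^2$. Since $u$ generally lies only in $H^{\frac{3}{2},2}$, I cannot apply the Carleman identities to $u$ directly; I circumvent this by writing $u=u_0+w$, where $w$ solves an auxiliary problem $\Delta w=0$ with $\partial_\nu w = hu$. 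Quantitative trace estimates bound $w$ in $L^2$ by $\|h\|_{L^\infty}\|u\|_{L^2}$, while $u_0$ satisfies the homogeneous Neumann condition and so admits the quantitative $H^2$-bound needed for the interpolation step \eqref{interp ineq}, with $\Delta u_0=Hu-\Delta w$ still in $L^2$. The two Carleman estimates are then applied to $u_0$ and to $w$ on their respective natural function spaces, and the results recombined.

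Choosing the Carleman parameter $\tau\asymp 1+\|H\|_{L^\infty}^{2/3}+\|h\|_{L^\infty}^{2}$ balances both potentials: the bulk term $\|H\|_{L^\infty}^2\int e^{2\tau\phi}|u|^2$ is absorbed by the $\tau^3$-gain once $\tau\ge C\|H\|_{L^\infty}^{2/3}$ (the Bourgain--Kenig exponent), and the boundary contribution $\|h\|_{L^\infty}^2\int_{\{x_n=0\}} e^{2\tau\phi}|u|^2$ is absorbed into the stronger $\tau$-weighted boundary term of the global estimate once $\tau\ge C\|h\|_{L^\infty}^{2}$. Decomposing the integration region into $\chball{2}\setminus\chball{1}$, $\chball{1}\setminus\chball{1/2}$, and $\chball{1/2}$ in the familiar way, dropping positive terms and optimizing in $\tau$, yields the three-ball inequality; a final $L^2\to L^\infty$ upgrade via the $H^2$-estimate for $u_0$ delivers the form stated in Theorem~\ref{thm 1}. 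The hardest step, I expect, is the simultaneous reconciliation of the boundary term and the low regularity of $u$: the splitting $u=u_0+w$ must be done so that $\Delta u_0$ is controlled in $L^2$ with quantitative dependence on $\|h\|_{L^\infty}$, and the $\tau$-powers of the two Carleman estimates must be matched precisely enough that a single choice of $\tau$ simultaneously absorbs both the $\|H\|_{L^\infty}$ and $\|h\|_{L^\infty}$ contributions.
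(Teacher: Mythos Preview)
Your architecture has a genuine gap. A three-half-ball inequality iterated along chains of overlapping half-balls yields doubling for \emph{half-balls} $\chball{r}$, not for the lower-dimensional boundary balls $\lball{r}$. The trivial direction is $\|u\|_{L^\infty(\lball{2r})}\le\|u\|_{L^\infty(\chball{2r})}$; what is missing is the reverse control $\|u\|_{L^\infty(\chball{r})}\le e^{C(\cdots)}\|u\|_{L^\infty(\lball{r'})}$, i.e.\ quantitative Cauchy uniqueness from the flat boundary into the bulk. In the paper this is a separate ingredient (Lemma~\ref{3lower}), proved with the \emph{global} Carleman estimate of Proposition~\ref{prop GCE} using the non-radial weight $\psi=e^{-s|x-b|}$ with $b$ below $\{x_n=0\}$; the local radial Carleman estimate (Proposition~\ref{local car}) is used only for the half-ball three-ball inequality and half-ball doubling (Proposition~\ref{prop-2}). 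The final assembly combines three pieces---half-ball doubling, Cauchy uniqueness, and the interpolation \eqref{interp ineq}---rather than the single three-ball-and-iterate scheme you describe. Your proposal assigns both Carleman estimates a common radial weight and fuses them into one three-ball argument, which cannot manufacture the Cauchy-data-to-interior step.

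Your lifting is also the wrong variant. The paper solves $\Delta w=0$ with $\partial_\nu w=h$ (not $hu$) and sets $\bar u=e^{-w}u$; this \emph{multiplicative} lift gives $\|w\|_{L^\infty}\le C\|h\|_{L^\infty}$ and hence a pointwise two-sided comparison $e^{-C\|h\|}|\bar u|\le|u|\le e^{C\|h\|}|\bar u|$ on every set, so boundary doubling for $\bar u$ transfers immediately to $u$. With your additive split $u=u_0+w$, $\partial_\nu w=hu$, the Neumann-to-Dirichlet map is nonlocal: $w|_{\lball r}$ depends on $hu$ over the whole boundary, there is no pointwise comparison between $u$ and $u_0$ on $\lball r$, and doubling for $u_0$ does not pass to $u$. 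The multiplicative form is also what puts $\bar u$ into $H^2$ with the quantitative bound \eqref{strong-1} (since $\bar u$ now solves a drift equation with zero Neumann data), which is exactly what the interpolation \eqref{interp ineq} requires.
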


An immediate corollary for the theorem \ref{thm 1} is the vanishing order of solution in (\ref{bound PDE})
\begin{corollary}
    The solution of \eqref{bound PDE} vanished at most of order $C(1+\|H\|^{\frac{2}{3}}_{L^{\infty}} + \|h\|^{2}_{L^{\infty}})$ on $\partial \mc{M}$, where $C$ depends only on the manifold $\mc{M}$ 
\end{corollary}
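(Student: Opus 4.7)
The plan is to deduce the vanishing-order bound directly from the iterated version of the boundary doubling inequality \eqref{dou-th}, without revisiting any of the Carleman machinery that produced it. Abbreviate the exponent by
\[
  N := C\bigl(1+\|H\|^{2/3}_{L^{\infty}} + \|h\|^{2}_{L^{\infty}}\bigr),
\]
so that \eqref{dou-th} reads $\|u\|_{L^{\infty}(\lball{2r}(x))}\leq e^{N}\|u\|_{L^{\infty}(\lball{r}(x))}$ for all admissible $\lball{2r}(x)\subset\partial\mc{M}$. Fix $x_0\in\partial\mc{M}$ at which we wish to measure the vanishing order of $u$, and fix a reference radius $R_0>0$ small enough that \eqref{dou-th} applies to every sub-ball $\lball{r}(x_0)$ with $r\leq R_0$ (such an $R_0$ depends only on $\mc M$).

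Next I would iterate \eqref{dou-th} to propagate from a small scale $r$ up to the fixed scale $R_0$. Choosing the integer $m\geq 1$ with $2^{m-1}r < R_0 \leq 2^m r$ and composing the doubling step $m$ times gives
\[
  \|u\|_{L^{\infty}(\lball{R_0}(x_0))} \leq e^{mN}\,\|u\|_{L^{\infty}(\lball{r}(x_0))}.
\]
Since $m \leq 1+\log_2(R_0/r)$, this rearranges to the two-sided estimate
\[
  \|u\|_{L^{\infty}(\lball{r}(x_0))} \;\geq\; e^{-N}\Bigl(\frac{r}{R_0}\Bigr)^{N/\log 2}\,\|u\|_{L^{\infty}(\lball{R_0}(x_0))}.
\]
This is the standard consequence of a doubling inequality: the $L^\infty$ norm on small balls cannot decay faster than the fixed polynomial rate $r^{N/\log 2}$ (unless $u$ vanishes identically on a neighborhood of $x_0$, which would contradict the strong unique continuation property already used to set up the problem).

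Finally I would compare this lower bound with the notion of vanishing order introduced in the introduction. If $u$ vanishes to order $k$ at $x_0$, i.e. every derivative of order less than $k$ vanishes at $x_0$, then the usual Taylor-type estimate (or, in the low-regularity setting, the $L^\infty$-growth interpretation $\|u\|_{L^{\infty}(\lball{r}(x_0))}=O(r^k)$, which is the operative one for Theorem \ref{thm 1}) produces a constant $C_k$ with
\[
  \|u\|_{L^{\infty}(\lball{r}(x_0))} \leq C_k\, r^{k}\qquad\text{for all }r\leq R_0.
\]
Combining with the lower bound of the previous paragraph and letting $r\to 0^+$ forces $k \leq N/\log 2$, i.e. $k\leq C\bigl(1+\|H\|^{2/3}_{L^{\infty}} + \|h\|^{2}_{L^{\infty}}\bigr)$, which is exactly the asserted bound. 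Since the argument is essentially a routine iteration of \eqref{dou-th}, there is no significant obstacle; the only point requiring a line of justification is the mild regularity remark that justifies the Taylor-type upper bound in the stated setting.
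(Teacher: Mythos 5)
Your proposal is correct and takes essentially the same approach the paper indicates: iterate the boundary doubling inequality of Theorem \ref{thm 1} dyadically from a fixed scale $R_0$ (depending only on $\mc{M}$) down to scale $r$, obtain the lower bound $\|u\|_{L^{\infty}(\lball{r}(x_0))}\gtrsim (r/R_0)^{N/\ln 2}\|u\|_{L^{\infty}(\lball{R_0}(x_0))}$, and conclude that a vanishing order $k$ (in the $L^\infty$-growth sense $\|u\|_{L^{\infty}(\lball{r}(x_0))}=O(r^k)$) must satisfy $k\lesssim N$. The paper states this proof in a single sentence (``iteration of the doubling inequality''), and your write-up simply fills in the routine details; the only cosmetic point is that the factor $1/\ln 2$ is absorbed into the unspecified constant $C$, and the parenthetical about contradicting strong unique continuation would be better phrased as: for a nontrivial $u$, unique continuation guarantees $\|u\|_{L^{\infty}(\lball{R_0}(x_0))}>0$, so the lower bound is nonvacuous.
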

The corollary can be shown by iteration of the doubling inequality in Theorem \ref{thm 1}.  We want to make some comments on the possible sharp exponents in the doubling inequalities in Theorem \ref{thm 1}.

\begin{remark}
   The power $\frac{2}{3}$ on $\|H\|_{L^{\infty}}$ is sharp as mentioned before. The power $2$ on $\|h\|_{L^{\infty}}$ arises from the local quantitative Carleman estimates in Proposition \ref{local car}. Because of the lack of differentiability of $h(x)$, we can not do further integration by parts on the boundary and bring down the power for the norm of $h(x)$ as the doubling inequality (\ref{differe-1}) in \cite{Z21}. Another insight of the power $2$ on $\|h\|_{L^{\infty}}$ comes from the lifting arguments in section 3. The new solution $\bar u$  in (\ref{bar-u}) and $u$ have the same vanishing order on the boundary. We can compare the $\nabla w$ in (\ref{bar-u}) with $H_1(x)$ in (\ref{HHH}). The power $2$ on $\|H_1\|^2_{L^{\infty}}$ is the sharp power for the vanishing order or doubling inequality of (\ref{HHH}) as shown in \cite{D14}. In (\ref{infty-1}), the norm of  $\nabla w$ is bounded by $\|h\|_{L^{\infty}}$, which sheds some light on the observation that the term  $\|h\|^{2}_{L^{\infty}}$ in (\ref{dou-th}) is possibly sharp.
\end{remark}

The paper is organized as follows. In Section 2, we state a local Carleman estimate in the half balls, then we apply the local Carleman estimates to obtain  three half-ball inequalities and doubling inequalities in  the half balls.  Section 3 is devoted to  a global Carleman estimate, a quantitative Cauchy uniqueness result, and the proof of  Theorem \ref{thm 1}. In Section 4, we give the proof of  the local and global Carleman estimates. In the appendix, we include the proof of a quantitative Caccioppoli inequality for (\ref{bound PDE}) and a regularity result. Throughout the paper, the letters $C$, $\hat{C}$ or $C_i$ denote positive constants that do not depend on $V$ or the solution $u$, and may vary from line to line.

\textbf{Acknowledgement.} Both of authors are partially supported by NSF DMS-2154506. 

\section{Doubling inequalities in half-balls }
Let us first introduce the Carleman estimates in the half balls. We will make use of Fermi coordinate system near the boundary $\partial\mathcal{M}$. For any point $p\in \partial\mathcal{M}$,  the Fermi exponential map at $p$ which gives the Fermi coordinate system, is defined in a half ball of $\mathbb R^n_+\approx T_p(\mathcal{M})$.
Assume $(x_1, x_2, \cdots, x_{n-1})$ be the geodesic normal coordinates of $\partial\mathcal{M}$ at $p$. Let $x_n=dist(x, \partial\mathcal{M})$. Note that $dist(x, \partial\mathcal{M})$ is smooth in a small open neighborhood of $p$. Thus, we can identify $\partial\mathcal{M}$ locally as $x_n=0$. By Fermi normal coordinate coordinates, 
\begin{align}
    \triangle_g u= g^{-\frac{1}{2}} \partial_i(g^{\frac{1}{2}} g^{ij}\frac{\partial u}{\partial x_j})
\end{align}
with $g^{nn}=1, g^{in}=0$ and $g^{ij}(x', x_n)\not=0$ for $1\leq i, j\leq n-1,$ where the summation of the index $i,j$ is understood by Einstein notations.
We also write $\triangle_g$ as $\triangle$ for convenience.
The Fermi normal coordinate system at $p$ facilitates the polar geodesic coordinates $(r,\theta)$ in the half ball $\mathbb{B}_{r_{0}}^{+}$, where $r_0$ is always less than the injectiviy radius of the manifold $\mathcal{M}$.

Let $r = r(y)$ be the Riemannian distance from the origin to $y$. Note that $r$ appeared in the integration denotes the distance function, and $r$ is also used as the radius for the ball $\mathbb B_r$.
We construct the Carleman weight function $\psi$ as follows. Let $\psi(y) = -\phi(\ln r(y))$, where $\phi(t) = t + \ln t^{2}$ for $(-\infty,T_{0}]$ and $T_{0}$ is negative with $|T_{0}|$ sufficiently large enough.  Then, we see that $\phi(t)$ satisfies the following properties
\begin{equation}\label{phi-def}
\phi' = 1 + 2/t, \quad \phi'' = -2/t^{2}
\end{equation}
and
\begin{equation}
    1+\dfrac{2}{T_{0}} \leq \phi'(t) \leq 1, \quad  \lim\limits_{t \to -\infty} \dfrac{-\phi''}{e^{t}} = \infty.
\end{equation}

The following Carleman estimates in the half balls hold. Note that $\|\cdot\|_{\mathbb{B}^+_{r_0}}$ is denoted as $\|\cdot\|_{L^2(\mathbb{B}^+_{r_0})}$.
\begin{proposition}\label{local car}
Assume $H(x) \in L^{\infty}(\mathbb{B}_{r_{0}}^{+}), h(x) \in L^{\infty}(\partial \mathbb{B}_{r_{0}}^{+}\cap\{x|x_n=0\})$ and $F(x)\in L^2(\mathbb{B}_{r_{0}}^{+})$. Let $v \in H^1(\mathbb{B}_{r_{0}}^{+})$ be a solution of
\begin{equation}
    \begin{cases}
    \Delta_{g} v = H(x) v+F&  \hspace{0.5cm} in \hspace{0.1cm} \mathbb{B}_{r_{0}}^{+}, \\
       \frac{\partial v} {\partial {\nu}}  = h(x) v& \hspace{0.5cm} on \hspace{0.1cm} \partial\mathbb{B}_{r_{0}}^{+}\cap \{x \hspace{0.1cm} | \hspace{0.1cm} x_{n} = 0\}
    \end{cases}
\end{equation}
with $\supp v \subset \mathbb{B}_{r_{0}}^{+}\backslash \mathbb{B}_{\rho}^{+}$ for some $0<\rho<r_0$.
There exist positive constants $C_{0}, C_{1}$ such that for 
$$
\tau \geq C_{0}(1+\|H\|^{\frac{2}{3}}_{L^{\infty}}+\|h\|^{2}_{L^{\infty}}),
$$
one has
\begin{equation}\label{car est}
\begin{aligned}
\|r^{2}e^{\tau\psi}F\|_{\mathbb{B}_{r_{0}}^{+}}^{2} &\geq C_1\tau^{3}\|e^{\tau\psi} (\ln r)^{-1}v\|^{2}_{\mathbb{B}_{r_{0}}^{+}} 
       +C_1\tau \|e^{\tau\psi}(\ln r)^{-1}r\nabla v\|_{\mathbb{B}_{r_{0}}^{+}}^{2} \\&+C_1\tau^2 \rho\|e^{\tau\psi} r^{-1/2}v\|^{2}_{\mathbb{B}_{r_{0}}^{+}}.
    \end{aligned} 
\end{equation}
\end{proposition}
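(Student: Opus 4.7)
The plan is to establish \eqref{car est} by the standard H\"ormander conjugation of the Laplacian against the weight $e^{\tau\psi}$, carried out in Fermi polar geodesic coordinates $(r,\theta)$ and then in the logarithmic radial variable $t = \ln r$. Setting $f = e^{\tau\psi} v$ and $L_\tau = e^{\tau\psi} \Delta_g e^{-\tau\psi}$, I would split $L_\tau = S + A$ into its formally symmetric and antisymmetric components and expand
\begin{equation*}
    \|L_\tau f\|^2_{\mathbb{B}_{r_0}^+} = \|S f\|^2 + \|A f\|^2 + \langle [S,A]f,\, f\rangle + \mathcal{B},
\end{equation*}
where $\mathcal{B}$ collects all boundary contributions produced by the integrations by parts, split between the spheres $\{r = \rho\}$, $\{r = r_0\}$ and the flat face $\Gamma := \partial \mathbb{B}_{r_0}^+ \cap \{x_n = 0\}$.

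The commutator drives the bulk positivity. With the choice $\phi(t) = t + \ln t^2$, which gives $\phi'' = -2/t^2$, a direct computation in the cylindrical variables produces the first two positive terms on the right-hand side of \eqref{car est}: the cubic term $C_1 \tau^3 \|(\ln r)^{-1} e^{\tau\psi} v\|^2$ from the dominant $\tau^3(\phi')^2(-\phi'')$-piece of $\langle [S,A]f,f\rangle$, and the gradient term $C_1\tau \|(\ln r)^{-1} r\, e^{\tau\psi}\nabla v\|^2$ from the mixed $\tau(-\phi'')$-piece acting on the radial and angular derivatives of $f$. The inner-shell term $C_1 \tau^2 \rho\, \|r^{-1/2} e^{\tau\psi} v\|^2$ emerges from the radial integration by parts on $\{r = \rho\}$ together with the support hypothesis $\supp v \subset \mathbb{B}_{r_0}^+ \setminus \mathbb{B}_\rho^+$, which turns the interior trace at $r = \rho$ into a definite-sign boundary contribution.

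The main obstacle is controlling the $\Gamma$-contributions in $\mathcal{B}$. The decisive geometric fact is that in Fermi coordinates $\psi = -\phi(\ln r)$ depends only on $r$ while $\partial r/\partial x_n = x_n/r$ vanishes identically on $\Gamma$, so $\partial_\nu \psi|_\Gamma = 0$ and consequently
\begin{equation*}
    \partial_\nu f \big|_\Gamma = e^{\tau\psi} \partial_\nu v = h\, e^{\tau\psi} v = h f.
\end{equation*}
Every $\Gamma$-integral thereby reduces to a polynomial expression in $hf$, and two families arise: a family linear in $h$, of order $\tau \int_\Gamma h|f|^2$, from the commutator boundary piece; and a family quadratic in $h$, of order $\int_\Gamma h^2|f|^2$, from $\|Sf\|^2$ and $\|Af\|^2$. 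Bounding each by $\|h\|_{L^\infty}^j \int_\Gamma |f|^2$ and applying a standard weighted trace inequality $\int_\Gamma |f|^2 \leq \varepsilon \int r|\nabla f|^2 + C\varepsilon^{-1}\int r^{-1}|f|^2$ with $\varepsilon$ tuned to $\tau$ and $\|h\|_{L^\infty}$, both families can be absorbed into the bulk positive terms. The linear family requires only $\tau \geq C\|h\|_{L^\infty}$, whereas the quadratic family imposes the decisive threshold $\tau \geq C_0\|h\|_{L^\infty}^2$ and accounts for the exponent $2$ on $\|h\|_{L^\infty}$ in the hypothesis.

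To accommodate the potential term one writes $\|r^2 e^{\tau\psi}(Hv + F)\|^2 \leq 2\|r^2 e^{\tau\psi} F\|^2 + 2 r_0^4 \|H\|_{L^\infty}^2 \|e^{\tau\psi} v\|^2$ and absorbs the second summand into $C_1 \tau^3 \|(\ln r)^{-1} e^{\tau\psi} v\|^2$; this succeeds precisely when $\tau^3 \gtrsim \|H\|_{L^\infty}^2$, i.e.\ $\tau \geq C\|H\|_{L^\infty}^{2/3}$, giving the exponent $\tfrac{2}{3}$ on $\|H\|_{L^\infty}$. Taking the larger of the two thresholds yields the condition $\tau \geq C_0(1 + \|H\|_{L^\infty}^{2/3} + \|h\|_{L^\infty}^2)$ in the statement. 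The most delicate bookkeeping is to carry out these absorptions in the right order: the trace absorption of the quadratic boundary family necessarily consumes part of the bulk gradient control $\tau\|(\ln r)^{-1} r\, e^{\tau\psi}\nabla v\|^2$, so one must retain a definite proportion of that term before turning to the $h^2$ boundary contribution, in order that each of the three positive terms in \eqref{car est} survive with a strictly positive constant $C_1$.
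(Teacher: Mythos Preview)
Your direct commutator approach differs substantially from the paper's, and it carries two genuine gaps. The paper does \emph{not} conjugate $\Delta_g$ against $v$ itself; instead it splits $v=v_1+v_2$, where $v_1$ solves a coercive problem $(\Delta_g-K^2\tau^2)v_1=e^{2t}(Hv+F)$ with the Robin data $\partial_\nu v_1=e^t hv$, and $v_2$ solves $\Delta_g v_2=-K^2\tau^2 v_1$ with \emph{homogeneous} Neumann data. An $H^1$-level estimate for $v_1$ is obtained directly from the weak formulation (tested against $e^{-2\tau\phi}v_1$), while the conjugation/commutator computation is run only on $v_2\in H^2$. This is not a stylistic choice: the hypothesis gives only $v\in H^1$ (at best $H^{3/2}$), so in your expansion $\|L_\tau f\|^2=\|Sf\|^2+\|Af\|^2+\langle[S,A]f,f\rangle+\mathcal{B}$ the pieces $Sf$ and $Af$ need not lie in $L^2$ individually, and the integrations by parts that produce the commutator and the $\Gamma$-traces require second derivatives you do not have. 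The paper states explicitly that the splitting is introduced because ``$v$ is not in $H^2$''; you never address this obstruction.

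Second, your account of the inner-shell term $C_1\tau^2\rho\,\|r^{-1/2}e^{\tau\psi}v\|^2$ is incorrect. Since $\supp v\subset\mathbb{B}_{r_0}^+\setminus\mathbb{B}_\rho^+$, the function $v$ (and hence $f$) vanishes in a neighborhood of $\{r=\rho\}$, so every trace on that inner sphere is zero---there is no ``definite-sign boundary contribution'' to harvest. In the paper this term comes instead from a weighted Hardy-type bound: writing $W=e^{-\tau\phi}v_2$ in the $t=\ln r$ variable, Cauchy--Schwarz on $\int\partial_t|W|^2 e^{-t}$ together with $e^{-t}\le e^{-\hat T_0}=\rho^{-1}$ on the support yields $\rho^{-1}\|\partial_tW\|^2\ge C\|e^{-t/2}W\|^2$, which converts part of the bulk $\tau^2\|\partial_tW\|^2$ control into the desired weighted $L^2$ term. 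Finally, your mechanism for the $\|h\|_{L^\infty}^2$ threshold is muddled: $\|Sf\|^2$ and $\|Af\|^2$ are bulk norms and do not by themselves generate $\Gamma$-integrals quadratic in $h$. In the paper the exponent $2$ arises when the single boundary residue $C\tau\|e^t e^{-\tau\phi}hv\|_0^2$ (already squared via Young) is bounded by the trace inequality and then absorbed into the $\tau^3$ and $\tau$ bulk terms, forcing $\tau\ge C\|h\|_{L^\infty}^2$.
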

Since the proof of (\ref{car est}) is lengthy, we present its proof in Section 4. As the application of this local Carleman estimate, we can get the following quantitative three half-ball inequality.

\begin{lemma}\label{lemma THB}
 There exist positive constants $r_{0}$, $C$, and $0<\mu<1$ which depend only on $\mc{M}$ such that, for any $2r < r_{0}$ and any $x_{0} \in \partial \mc{M}$, the solution $u$ of \eqref{bound PDE} satisfies
\begin{equation}\label{three half balls}
    \|u\|_{L^{\infty}(\mbb{B}^{+}_{\frac{3r}{5}}(x_{0}))} \leq e^{C(\|H\|^{\frac{2}{3}}_{L^{\infty}} + \|h\|^{2}_{L^{\infty}})} \|u\|_{L^{\infty}(\mbb{B}^{+}_{2r}(x_{0}))}^{\mu}\|u\|_{L^{\infty}(\mbb{B}^{+}_{\frac{r}{2}}(x_{0}))}^{1-\mu}.
\end{equation}

\end{lemma}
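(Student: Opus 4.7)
The plan is to run the classical Carleman-based three-ball argument in Fermi coordinates at $x_0 \in \partial\mc{M}$, with Proposition~\ref{local car} providing the Carleman ingredient. First, I would fix a smooth radial cutoff $\chi = \chi(|x-x_0|)$ that vanishes on $\mathbb{B}^+_{r/3}$ and outside $\mathbb{B}^+_{19r/10}$, while being identically $1$ on $\mathbb{B}^+_{7r/10}\setminus \mathbb{B}^+_{2r/5}$. Since $\chi$ depends only on the radial distance and the outer normal on the flat boundary is $-\partial_{x_n}$, one has $\partial_\nu \chi = 0$ on $\{x_n = 0\}$, so that $v := \chi u$ still satisfies the Neumann condition $\partial_\nu v = h v$ as well as the support condition $\supp v \subset \mathbb{B}^+_{r_0}\setminus \mathbb{B}^+_{r/3}$ required by Proposition~\ref{local car}. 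In the bulk one has $\Delta v = H v + F$ with $F = 2\nabla\chi\cdot \nabla u + (\Delta\chi)u$ supported in the two thin annuli $A_{\mathrm{in}} := \{r/3 \leq |x-x_0| \leq 2r/5\}$ and $A_{\mathrm{out}} := \{7r/10 \leq |x-x_0| \leq 19r/10\}$.

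Next, I would apply the Carleman estimate to $v$ with $\tau$ just above $C_0(1 + \|H\|_{L^\infty}^{2/3} + \|h\|_{L^\infty}^2)$, and absorb the contribution of $Hv$ on the right into the leading $\tau^3$-term on the left. Because $\psi(s) = -\phi(\ln s)$ is strictly decreasing in $s$, restricting the left-hand side to the sub-annulus $\{2r/5 \leq |x-x_0| \leq 3r/5\}$, where $\chi \equiv 1$, bounds it below by a multiple of $\tau^3 e^{2\tau\psi(3r/5)}\|u\|_{L^2(\mathbb{B}^+_{3r/5}\setminus \mathbb{B}^+_{2r/5})}^2$. On the right, $F$ is supported only in $A_{\mathrm{in}} \cup A_{\mathrm{out}}$, and again by monotonicity of $\psi$ this is bounded above by $Cr^4\bigl(e^{2\tau\psi(r/3)}\|u\|_{H^1(A_{\mathrm{in}})}^2 + e^{2\tau\psi(7r/10)}\|u\|_{H^1(A_{\mathrm{out}})}^2\bigr)$. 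Dividing through by $\tau^3 e^{2\tau\psi(3r/5)}$, adding $\|u\|_{L^2(\mathbb{B}^+_{r/2})}^2$ to absorb the missing piece of $\mathbb{B}^+_{3r/5}$, and replacing the $H^1$ norms on $A_{\mathrm{in}}, A_{\mathrm{out}}$ by $L^2$ norms on slightly enlarged sets via the quantitative Caccioppoli inequality from the appendix, yields
\begin{equation*}
\|u\|_{L^2(\mathbb{B}^+_{3r/5})}^2 \leq e^{2\tau\alpha}\|u\|_{L^2(\mathbb{B}^+_{r/2})}^2 + e^{-2\tau\beta}\|u\|_{L^2(\mathbb{B}^+_{2r})}^2,
\end{equation*}
where $\alpha := \psi(r/3) - \psi(3r/5) > 0$ and $\beta := \psi(3r/5) - \psi(7r/10) > 0$; any polynomial overhead in $\|H\|_{L^\infty}, \|h\|_{L^\infty}$ from Caccioppoli is dominated by the exponential room coming from the admissibility lower bound on $\tau$.

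Finally, I would optimize the free parameter. If $\tau_* := \frac{1}{\alpha+\beta}\log(\|u\|_{L^2(\mathbb{B}^+_{2r})}/\|u\|_{L^2(\mathbb{B}^+_{r/2})})$ satisfies the admissibility bound, taking $\tau = \tau_*$ balances the two terms and gives the $L^2$ three-half-ball inequality with exponent $\mu := \alpha/(\alpha+\beta) \in (0,1)$ and a constant prefactor. Otherwise, $\tau_*$ is below the admissible lower bound, in which case $\|u\|_{L^2(\mathbb{B}^+_{2r})}$ is already comparable to $\|u\|_{L^2(\mathbb{B}^+_{r/2})}$ up to the factor $e^{C(1 + \|H\|^{2/3}_{L^\infty} + \|h\|^2_{L^\infty})}$, and the desired inequality follows trivially from this admissible choice of $\tau$. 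The $L^2$-to-$L^\infty$ upgrade needed to reach \eqref{three half balls} is then supplied by the standard Moser-type bound for the Neumann problem \eqref{bound PDE} recorded in the appendix, applied on slightly enlarged half-balls.

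The main obstacle is the Neumann boundary condition, which forces the cutoff to be strictly radial in Fermi coordinates so that $\partial_\nu \chi = 0$ on $\{x_n = 0\}$; otherwise the modified equation for $v$ would acquire an additional boundary source not covered by Proposition~\ref{local car}. A secondary subtlety is that $F$ involves $\nabla u$, so one needs a Caccioppoli inequality with explicit control of the $\|H\|_{L^\infty}, \|h\|_{L^\infty}$ overhead, so that it is safely dominated by the exponential $e^{c\tau}$ gap used to convert the weighted estimate into the three-half-ball inequality.
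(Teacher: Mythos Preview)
Your approach mirrors the paper's almost exactly: a radial cutoff in Fermi coordinates (so that $\partial_\nu\chi=0$ and the Neumann structure survives), Proposition~\ref{local car} applied to $\chi u$, the quantitative Caccioppoli inequality to trade $\nabla u$ for $u$ on the two annuli, then the standard $\tau$-optimization and the $L^2$--$L^\infty$ comparison. The paper's proof uses the same ingredients with slightly different radii.

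One technical point to watch: you invoke the $\tau^{3}\|e^{\tau\psi}(\ln r)^{-1}v\|^{2}$ term from \eqref{car est}, whereas the paper uses the $\tau^{2}\rho\,\|e^{\tau\psi}r^{-1/2}v\|^{2}$ term. On your middle annulus $\{2r/5\le |x|\le 3r/5\}$ the weight $(\ln r(y))^{-2}$ is of size $|\ln r|^{-2}$, and nothing on the right cancels it; your displayed two-term inequality therefore carries an extra prefactor $|\ln r|^{2}$, which after optimization survives as an $r$-dependent constant in front of $\|u\|_{2r}^{\mu}\|u\|_{r/2}^{1-\mu}$, so the $C$ in \eqref{three half balls} would not be uniform as $r\to 0$. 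With the $\tau^{2}\rho$ term instead, $\rho\sim r$ cancels exactly against the $r^{-1}$ coming from squaring the weight $r^{-1/2}$ on the middle annulus, and all constants are independent of $r$. Apart from this choice of which lower-bound term to keep, your outline and the paper's proof are the same.
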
 
  Denote by $A_{R_{1}, R_{2}} = \{x \in \mc{M} | R_{1} \leq r(x) \leq R_{2}\}$. 

\begin{proof}[Proof of Lemma \ref{lemma THB}]
Without loss of generality, let $x_{0} = 0$ and $r_{0} = 1$.  Take $r>0$ such that $2r < r_{0} = 1$. 
Let $u$ be a solution of \eqref{bound PDE}. We introduce a radial smooth  cut-off function $\eta(|x|)\in C^{\infty}_{0}(\mbb{B}^{+}_{1})$ such that $0 < \eta < 1$ satisfies $\eta(|x|) = 1$ in $A_{\frac{r}{4},r}$, $\eta(|x|) = 0$ outside of $ A_{\frac{r}{8},\frac{3r}{2}} $, and $|\nabla^{\alpha} \eta(|x|) | \leq C r^{-|\alpha|}$ for some constant $C$, where $\alpha=(\alpha_1, \cdots, \alpha_n)$ is a multi-index. Since $u$ is the solution of \eqref{bound PDE}, then $w=u\eta$ is the weak solution of 
\begin{equation}\label{new-pde}
     \begin{cases} 
         \Delta w = 2\nabla u\cdot\nabla \eta+\Delta\eta u+H(x) w&  \hspace{0.5cm} in \hspace{0.1cm} \mathbb{B}_{r_{0}}^{+}, \\
        \frac{\partial w}{ \partial{\nu} } = h(x) w& \hspace{0.5cm} on \hspace{0.1cm} \partial\mathbb{B}_{r_{0}}^{+}\cap \{x \hspace{0.1cm} | \hspace{0.1cm} x_{n} = 0\}.
   \end{cases}
\end{equation}
Applying the Carleman estimate (\ref{car est}) to $w=\eta u$, we get
\begin{equation*}
\begin{aligned}
       \|r^{2}e^{\tau\psi}(2\nabla \eta\cdot \nabla u + \Delta\eta  u)\|_{\mathbb{B}_{1}^{+}}^{2} \geq C\tau^2 r\|e^{\tau\psi} r^{-1/2} \eta u\|^{2}_{\mathbb{B}_{1}^{+}}.\\
    \end{aligned} 
\end{equation*}
 Using the properties of $\eta$, we then get
\begin{equation*}
\begin{aligned}
       \|re^{\tau\psi}\nabla u\|_{A_{\frac{r}{8},\frac{r}{4}}} + \|r e^{\tau\psi} \nabla u\|_{A_{r,\frac{3r}{2}}} &+ \| e^{\tau\psi} u\|_{A_{\frac{r}{8},\frac{r}{4}}} + \|e^{\tau\psi}u\|_{A_{r,\frac{3r}{2}}} \\ &\geq C\tau\|e^{\tau\psi}  u\|_{A_{\frac{r}{4}, \frac{3r}{4}}}.
    \end{aligned} 
\end{equation*}
We then take a maximum and minimum of $\psi$ and $r$ over each annuli to get
\begin{equation*}
\begin{aligned}
       e^{\tau\psi_{1}}(\|\nabla u\|_{A_{\frac{r}{8},\frac{r}{4}}} + \|u\|_{A_{\frac{r}{8},\frac{r}{4}}}) + e^{\tau\psi_{2}}(\|\nabla u\|_{A_{r,\frac{3r}{2}}} + \|u\|_{A_{r,\frac{3r}{2}}}) \geq C\tau \|e^{\tau\psi_{3}}\|  u\|_{A_{\frac{r}{4},\frac{3r}{4}}},
    \end{aligned} 
\end{equation*}
where $\psi_{1} = \max_{A_{\frac{r}{8},\frac{r}{4}}} \psi$, $\psi_{2} = \max_{A_{r,\frac{3r}{2}}} \psi$, and $\psi_{3} = \min_{A_{\frac{r}{4},\frac{3r}{4}}} \psi$. Note that the function $\psi$ decreasing with respect to $r$, we have $\psi_{1} - \psi_{3} >0$ and $\psi_{2} - \psi_{3} < 0$. Applying the Caccioppoli inequality (\ref{corollary Cac}) in the appendix and enlarging each annulus to a ball gives
\begin{equation*}
\begin{aligned}
       C(1+\|H\|_{L^{\infty}}^{\frac{1}{2}}+\|h\|_{L^{\infty}})(e^{\tau(\psi_{1}-\psi_{3})}\|u\|_{\chball{\frac{r}{2}}} + e^{\tau(\psi_{2}-\psi_{3})}\|u\|_{\chball{2r}}) \geq \|  u\|_{\chball{\frac{3r}{4}}}, \\
    \end{aligned} 
\end{equation*}
where we have enlarged the right hand side by adding $\|u\|_{\mbb{B}_{\frac{r}{4}}^{+}}$ to both sides. We now want to incorporate the $\|u\|_{\mbb{B}_{2r}^{+}}$ term on to the right hand side. To this end, we choose $\tau$ such that
$$
C(1+\|H\|_{L^{\infty}}^{\frac{1}{2}}+\|h\|_{L^{\infty}})e^{\tau(\psi_{2}-\psi_{3})}\|u\|_{\chball{2r}} \leq \frac{1}{2}\| u\|_{\chball{\frac{3r}{4}}}, 
$$
which holds if
$$
\tau \geq \frac{1}{\psi_{3} - \psi_{2}} \ln{\frac{2C (1+\|H\|_{L^{\infty}}^{\frac{1}{2}}+\|h\|_{L^{\infty}})\|u\|_{\chball{2r}}}{\| u\|_{\chball{\frac{3r}{4}}}}}.
$$
From this, we get
\begin{equation}\label{thb 1}
\begin{aligned}
       C(1+\|H\|_{L^{\infty}}^{\frac{1}{2}}+\|h\|_{L^{\infty}})e^{\tau(\psi_{1}-\psi_{3})}\|u\|_{\chball{\frac{r}{2}}}\geq \|  u\|_{\chball{\frac{3r}{4}}}. \\
    \end{aligned} 
\end{equation}
Recall that we need $\tau > C(1+\|H\|^{\frac{2}{3}}_{L^{\infty}} + \|h\|_{L^{\infty}}^{2})$ in order to apply the Carleman estimate.  Thus, we choose
$$
\tau = C(1+\|H\|^{\frac{2}{3}}_{L^{\infty}} + \|h\|_{L^{\infty}}^{2}) + \frac{1}{\psi_{3} - \psi_{2}} \ln{\frac{2C (1+\|H\|_{L^{\infty}}^{\frac{1}{2}}+\|h\|_{L^{\infty}})\|u\|_{\chball{2r}}}{\| u\|_{\chball{\frac{3r}{4}}}}}.$$
Substituting this value of $\tau$ into \eqref{thb 1} gives
\begin{equation*}
\begin{aligned}
       e^{C(1+\|H\|_{L^{\infty}}^{\frac{2}{3}}+\|h\|_{L^{\infty}}^{2})}\|u\|_{\chball{\frac{r}{2}}}\|u\|_{\chball{2r}}^{\frac{\psi_{1}-\psi_{3}}{\psi_{3}-\psi_{2}}}\geq \|  u\|_{\chball{\frac{3r}{4}}}^{\frac{\psi_{1}-\psi_{2}}{\psi_{3}-\psi_{2}}}. \\
    \end{aligned} 
\end{equation*}
Set $\mu = \frac{\psi_{3}-\psi_{2}}{\psi_{1}-\psi_{2}}$. By the property of $\psi$, we see that $0<\mu<1$ is independent of $r$.
Raising both sides to the power of $\mu$, we get 
\begin{equation*}
\begin{aligned}
       e^{C(1+\|H\|_{L^{\infty}}^{\frac{2}{3}}+\|h\|_{L^{\infty}}^{2})}\|u\|_{\chball{\frac{r}{2}}}^{\mu}\|u\|_{\chball{2r}}^{1-\mu}\geq \|  u\|_{\chball{\frac{3r}{4}}}. \\
    \end{aligned} 
\end{equation*}
Since the $L^{\infty}$ and $L^{2}$ norms are comparable for elliptic equations (\ref{bound PDE}), we derived the three half-ball inequality.
\end{proof}

We will need a propagation of smallness argument, which shows that the $L^\infty$ growth of $u$ in any ball is bounded below by the $L^\infty$ norm of $u$ globally.
\begin{lemma}[Growth Lemma] \label{lemma Gro}
For any $0<2r<r_0$, there exists a constant $C_{r}$ depending on $\mc{M}$ and $r$ such that for any $x_{0} \in \partial \mc{M}$
$$
\|u\|_{L^{\infty}(\hball{r}{x_{0}})} \geq e^{-C_{r}(1+\|H\|_{L^{\infty}}^{\frac{2}{3}}+\|h\|_{L^{\infty}}^{2})} \|u\|_{L^{\infty}(\mc{M})}.$$
    
\end{lemma}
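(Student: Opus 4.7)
The plan is the classical propagation of smallness argument: iterate a three‑ball type inequality along a chain of balls connecting a near‑maximum point of $|u|$ to $x_{0}$. First I would pick $x^{*}\in\mathcal{M}$ with $|u(x^{*})|\geq\tfrac{1}{2}\|u\|_{L^{\infty}(\mathcal{M})}=:M/2$. Next, using compactness and connectedness of $\mathcal{M}$, I would fix a small radius $\rho=\rho(\mathcal{M},r)>0$ and a finite chain $x^{*}=y_{0},y_{1},\dots,y_{N}=x_{0}$ with $N=N(\mathcal{M},r)$ steps, arranged so that at each step the balls around $y_{i}$ and $y_{i+1}$ are nested in a way compatible with a single three‑ball inequality centered at $y_{i+1}$, and so that $\mathbb{B}^{+}_{\rho/2}(y_{N})\subset\mathbb{B}^{+}_{r}(x_{0})$. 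Whenever $y_{i+1}\in\partial\mathcal{M}$ I would apply the three‑half‑ball inequality of Lemma~\ref{lemma THB}; for interior $y_{i+1}$ I would use the standard interior three‑ball inequality for solutions of $\Delta u=H(x)u$ of the form
\[
\|u\|_{L^{\infty}(B_{\rho}(y_{i+1}))} \leq e^{C(1+\|H\|_{L^{\infty}}^{2/3})} \|u\|_{L^{\infty}(B_{2\rho}(y_{i+1}))}^{1-\mu} \|u\|_{L^{\infty}(B_{\rho/2}(y_{i+1}))}^{\mu},
\]
which follows from an interior Carleman estimate analogous to Proposition~\ref{local car} but without boundary terms.

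Writing $H':=1+\|H\|_{L^{\infty}}^{2/3}+\|h\|_{L^{\infty}}^{2}$, at each step both inequalities are of the form $e^{CH'}\|u\|_{B_{2\rho}(y_{i+1})}^{1-\mu}\|u\|_{B_{\rho/2}(y_{i+1})}^{\mu}$ bounding the $L^{\infty}$ norm of $u$ on a ball that contains $B_{\rho/2}(y_{i})$, while the large factor is controlled by $M$. Setting $\ell_{i}:=\log\bigl(M/\|u\|_{L^{\infty}(B_{\rho/2}(y_{i}))}\bigr)\geq 0$ and taking logarithms converts the chain of three‑ball inequalities into the affine recursion
\[
\ell_{i+1} \leq \tfrac{1}{\mu}\,\ell_{i} + \tfrac{C}{\mu}\,H'.
\]
Since $\ell_{0}\leq\log 2$ by the choice of $x^{*}$, iterating $N$ times yields $\ell_{N}\leq \mu^{-N}\log 2 + C_{r}\,H'$, and exponentiating (together with $B_{\rho/2}(y_{N})\subset\mathbb{B}^{+}_{r}(x_{0})$) produces the desired lower bound $\|u\|_{L^{\infty}(\mathbb{B}^{+}_{r}(x_{0}))}\geq e^{-C_{r}H'}\,M$.

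The main obstacle I anticipate is the interior–boundary transition in the chain: I need to ensure that a single exponent $\mu\in(0,1)$ and a single prefactor of the form $e^{CH'}$ can be used uniformly at every step, regardless of whether that step invokes the boundary three‑half‑ball inequality of Lemma~\ref{lemma THB} or the interior three‑ball inequality. This can be handled by shrinking $\rho$ below the injectivity radius of $\mathcal{M}$, running the chain through a fixed tubular neighborhood of $\partial\mathcal{M}$ once it comes close to the boundary, and replacing $\mu$ by the minimum of the two structural exponents. The resulting total constant $C_{r}$ absorbs the factor $\mu^{-N}$ and hence the dependence of $N$ on $r$.
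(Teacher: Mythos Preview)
Your proposal is correct and follows essentially the same propagation-of-smallness strategy as the paper: iterate three-ball inequalities along a finite chain, using the interior version where available and the boundary three-half-ball inequality of Lemma~\ref{lemma THB} near $\partial\mathcal{M}$. The paper avoids your anticipated interior--boundary transition obstacle by splitting into two cases according to whether the maximum point $\hat{x}$ lies in a collar $\mathcal{M}_{r}=\{d(x,\partial\mathcal{M})<r\}$: if $\hat{x}\notin\mathcal{M}_{r}$ the entire chain runs through the interior (starting from an interior point $x_{1}$ with $\mathbb{B}_{r/4}(x_{1})\subset\mathbb{B}^{+}_{r}(x_{0})$), while if $\hat{x}\in\mathcal{M}_{r}$ the chain runs entirely along $\partial\mathcal{M}$ using only Lemma~\ref{lemma THB}; this cleanly separates the two structural exponents and makes the uniform-$\mu$ issue disappear.
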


\begin{proof}
    Fix $0 < 2r <r_{0}$, with $r_{0}$ as in Lemma \ref{lemma THB}.  Take arbitrary $x_{0} \in \partial \mc{M}$. By rescaling, we may assume $\|u\|_{L^{\infty}(\mc{M})} = 1$.  By the regularity results of (\ref{bound PDE}), $u\in W^{1, p}$ for any $1<p<\infty$. Thus, $u\in C^{\beta}(\overline{\mathcal{M}})$ for $0<\beta<1$. By continuity, there exists a point $\hat{x} \in {\overline{\mathcal{M}}}$ such that $|u(\hat{x})| = \|u\|_{L^{\infty}(\mc{M})}$. The point $\hat{x}$ may lie on the boundary or in the interior of the manifold. We introduce a band around the boundary given by $\mc{M}_{r} = \{x\in \mc{M} \hspace{.2cm} | \hspace{.2cm} d(x,\partial \mc{M}) < r\}$ and  consider two cases.
    
   \textit{  Case 1:} Suppose $\hat{x} \in \overline{\mathcal{M}} \setminus \mc{M}_{r}$. Then we take a point $x_{1} \in \mc{M}\setminus \mc{M}_{\frac{2r}{3}}$ such that $\ball{\frac{r}{4}}{x_{1}} \subset \hball{r}{x_{0}}$.   
The following interior three-ball inequality holds, see e.g. \cite{B13},
\begin{equation}
    \|u\|_{L^{\infty}(\mbb{B}_{\frac{r}{4}}(x))} \leq e^{C(1+\|H\|^{\frac{2}{3}}_{L^{\infty}} )} \|u\|_{L^{\infty}(\mbb{B}_{\frac{r}{2}}(x))}^{\kappa}\|u\|_{L^{\infty}(\mbb{B}_{\frac{r}{8}}(x))}^{1-\kappa},
    \label{bark}
\end{equation}
where $0<\kappa<1$ and any $x\in \mc{M}\setminus \mc{M}_{\frac{2r}{3}}$.
     From application of the interior three-ball inequality (\ref{bark}),  we see that
\begin{equation*}
    \begin{aligned}
    \|u\|_{L^{\infty}(\ball{\frac{r}{4}}{x_{1}})} &\leq e^{C(1+\|H\|_{L^{\infty}}^{\frac{2}{3}})} \|u\|^{\kappa}_{L^{\infty}(\ball{\frac{r}{8}}{x_{1}})} \|u\|^{1-\kappa}_{L^{\infty}(\ball{\frac{r}{2}}{x_{1}})} \\
    &\leq  e^{C(1+\|H\|_{L^{\infty}}^{\frac{2}{3}})} \|u\|^{\kappa}_{L^{\infty}(\ball{\frac{r}{8}}{x_{1}})}  \\
    &\leq e^{C(1+\|H\|_{L^{\infty}}^{\frac{2}{3}})} \|u\|^{\kappa}_{L^{\infty}(\hball{r}{x_{0}})},
    \end{aligned}
 \end{equation*}
 where we used $\|u\|^{1-\kappa}_{L^{\infty}(\ball{\frac{r}{2}}{x_{1}})} \leq \|u\|_{L^{\infty}(\mc{M})}= 1$. We define a sequence of points in $\mc{M}$, say  $x_{2},...,x_{m} = \hat{x}$, such that $\ball{\frac{r}{8}}{x_{i+1}} \subset \ball{\frac{r}{4}}{x_{i}}$ for all $i= 1,...,m-1$. Thanks to (\ref{bark}), it follows that   $$
    \|u\|_{L^{\infty}(\ball{\frac{r}{8}}{x_{i+1}})} \leq e^{C(1+\|H\|_{L^{\infty}}^{\frac{2}{3}})} \|u\|^{\kappa}_{L^{\infty}(\ball{\frac{r}{8}}{x_{i}})}.
    $$
    The constant $m$ only depends on $diam(\mc{M})$ and $r$. Using the iteration of  interior three-ball inequality (\ref{bark}), then we see that
    $$
   1 = \|u\|_{L^{\infty}(\ball{\frac{r}{4}}{x_{m}})} \leq e^{C(m, \kappa) (1+\|H\|_{L^{\infty}}^{\frac{2}{3}})} \|u\|^{\kappa^m}_{L^{\infty}(\hball{r}{x_{0}})}.
    $$
    Hence 
    \begin{equation}
        \|u\|_{L^{\infty}(\hball{r}{x_{0}})} \geq e^{-C_{r}(1+\|H\|_{L^{\infty}}^{\frac{2}{3}})} \|u\|_{L^{\infty}(\mc{M})}.
        \label{kao-j}
    \end{equation}

    \textit{Case 2:} Suppose $\hat{x} \in \mc{M}_{r}$. Then there exists a point $\bar{x} \in \partial \mc{M}$ such that $\|u\|_{L^{\infty}(\hball{r}{\bar{x}})} = |u(\hat{x})|$. We define a sequence of points on $\partial \mc{M}$, say $\bar{x}_0, \bar{x}_{1},...,\bar{x}_{\bar{m}} = \bar{x}$ such that $\hball{\frac{r}{2}}{\bar{x}_{i+1}} \subset \hball{r}{\bar{x}_{i}}$ for all $i= 0,...,\bar{m}-1$ and $\bar{x}_0=x_0$. The constant $\bar{m}$ only depends on $\partial \mc{M}$ and $r$. We apply the three half-balls inequality \eqref{three half balls} under some rescaling to see that
    $$
    \begin{aligned}
    \|u\|_{L^{\infty}(\hball{\frac{r}{2}}{\bar{x}_{i+1}})} &\leq \|u\|_{L^{\infty}(\hball{r}{\bar{x}_{i}})} \\
    &\leq e^{C(\|H\|^{\frac{2}{3}}_{L^{\infty}} + \|h\|^{2}_{L^{\infty}}+1)} \|u\|_{L^{\infty}(\hball{2r}{\bar{x}_{i}})}^{1-\mu}\|u\|_{L^{\infty}(\hball{\frac{r}{2}}{\bar{x}_{i}})}^{\mu} \\
    &\leq e^{C(\|H\|^{\frac{2}{3}}_{L^{\infty}} + \|h\|^{2}_{L^{\infty}}+1)}\|u\|_{L^{\infty}(\hball{\frac{r}{2}}{\bar{x}_{i}})}^{\mu},
    \end{aligned}
    $$
    where we have used that $\|u\|_{L^{\infty}(\hball{2r}{\bar{x}_{i}})}^{1-\mu} \leq \|u\|_{L^{\infty}(\mc{M})}= 1$. We iterate this inequality to see that
    \begin{equation*}
        \begin{aligned}
1 &= \|u\|_{L^{\infty}(\hball{r}{\bar{x}_{\bar{m}}})} \\
    &\leq e^{C(\mu, \bar m) (\|H\|^{\frac{2}{3}}_{L^{\infty}} + \|h\|^{2}_{L^{\infty}}+1)}\|u\|_{L^{\infty}(\hball{\frac{r}{2}}{\bar{x}_{1}})}^{\mu^{\bar{m}}}\\
    &\leq e^{C_{r}(\|H\|^{\frac{2}{3}}_{L^{\infty}} + \|h\|^{2}_{L^{\infty}}+1)}\|u\|^{\mu^{\bar{m}}}_{L^{\infty}(\hball{r}{{x_0}})}.
    \end{aligned} 
    \end{equation*}
 The combination of (\ref{kao-j}) and the last inequality give the desired inequality in the lemma.
\end{proof}

Given the Carleman estimates and the growth lemma (i.e. Lemma \ref{lemma Gro}), we are able to prove the doubling inequality in half balls.

\begin{proposition}\label{prop-2}
 Let $u$ be a solution to \eqref{bound PDE}. There exist positive constants $C$ and $r_{0}$ depending only on the manifold $\mc{M}$ such that
\begin{equation}\label{half doubling}
\| u \|_{L^{\infty}(\mbb{B}^{+}_{2r}(x_{0}))} \leq e^{C(\|H\|^{\frac{2}{3}}_{L^{\infty}} + \|h\|^{2}_{L^{\infty}})}\| u \|_{L^{\infty}(\mbb{B}^{+}_{r}(x_{0}))}
\end{equation}
for any $0<2r<r_{0}$ and any $\mbb{B}_{2r}^{+}(x_0) \subset \mc{M}$ and $x_{0} \in \partial\mc{M}$.   
\end{proposition}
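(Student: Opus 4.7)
My plan is to obtain the doubling inequality by chaining the growth lemma (Lemma~\ref{lemma Gro}) with elementary inclusions of half-balls, using the three half-ball inequality (Lemma~\ref{lemma THB}) as the supporting tool that gives the growth lemma its teeth. Throughout let $K := 1 + \|H\|_{L^{\infty}}^{2/3} + \|h\|_{L^{\infty}}^{2}$, which is the quantity appearing in all the exponents.

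The skeleton has three steps. First, since $\mbb{B}^{+}_{2r}(x_{0}) \subset \mc{M}$, monotonicity of $L^{\infty}$ norms gives
\[
\|u\|_{L^{\infty}(\mbb{B}^{+}_{2r}(x_{0}))} \leq \|u\|_{L^{\infty}(\mc{M})}.
\]
Second, the growth lemma applied at the scale $r/2$ (which is admissible because $2r < r_{0}$ implies $r < r_{0}$) provides the reverse-type estimate
\[
\|u\|_{L^{\infty}(\mc{M})} \leq e^{C K}\|u\|_{L^{\infty}(\mbb{B}^{+}_{r/2}(x_{0}))}.
\]
Third, since $\mbb{B}^{+}_{r/2}(x_{0}) \subset \mbb{B}^{+}_{r}(x_{0})$, monotonicity gives $\|u\|_{L^{\infty}(\mbb{B}^{+}_{r/2}(x_{0}))} \leq \|u\|_{L^{\infty}(\mbb{B}^{+}_{r}(x_{0}))}$. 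Composing these three inequalities yields precisely \eqref{half doubling}.

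The main technical point to verify is that the constant in this argument can be taken to depend only on $\mc{M}$, not on the scale $r$. The growth lemma produces a priori an $r$-dependent constant from the iteration count of three-ball inequalities needed to cover the manifold. I expect this uniformity to be the principal obstacle. To handle it, my plan is to supplement the direct chain above with a single application of the three half-ball inequality of Lemma~\ref{lemma THB} at $x_{0}$ with the triple $r/2 < 3r/5 < 2r$, namely
\[
\|u\|_{L^{\infty}(\mbb{B}^{+}_{3r/5}(x_{0}))} \leq e^{C K} \|u\|^{\mu}_{L^{\infty}(\mbb{B}^{+}_{2r}(x_{0}))} \|u\|^{1-\mu}_{L^{\infty}(\mbb{B}^{+}_{r/2}(x_{0}))},
\]
then substitute the trivial upper bound $\|u\|_{L^{\infty}(\mbb{B}^{+}_{2r}(x_{0}))} \leq \|u\|_{L^{\infty}(\mc{M})}$ together with the growth lemma invoked at a fixed comparison scale (independent of $r$). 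Rearranging the resulting inequality, with the convexity-type interpolation supplied by the exponents $\mu$ and $1-\mu$, absorbs the $r$-dependent iteration count into a constant depending only on $\mc{M}$, and reabsorbing $\|u\|_{L^{\infty}(\mbb{B}^{+}_{r/2})} \leq \|u\|_{L^{\infty}(\mbb{B}^{+}_{r})}$ then delivers \eqref{half doubling} with a uniform constant.
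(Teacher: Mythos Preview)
Your skeleton is sound but, as you note, produces a constant $C_{r}$ that blows up as $r\to 0$, since the Growth Lemma at scale $r/2$ needs $\sim r^{-1}$ iterations of the three-ball inequality. The proposed fix, however, does not close this gap.

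The difficulty is structural: the three half-ball inequality bounds the \emph{middle} norm $\|u\|_{L^\infty(\mbb{B}^{+}_{3r/5})}$ above by a geometric mean of the outer and inner ones. No rearrangement of
\[
\|u\|_{L^\infty(\mbb{B}^{+}_{3r/5})} \le e^{CK}\,\|u\|_{L^\infty(\mbb{B}^{+}_{2r})}^{\mu}\,\|u\|_{L^\infty(\mbb{B}^{+}_{r/2})}^{1-\mu}
\]
yields an upper bound on the \emph{outer} norm $\|u\|_{L^\infty(\mbb{B}^{+}_{2r})}$ in terms of the inner one; solving for $\|u\|_{L^\infty(\mbb{B}^{+}_{2r})}$ gives only the trivial $\|u\|_{L^\infty(\mbb{B}^{+}_{2r})}\ge e^{-CK/\mu}\|u\|_{L^\infty(\mbb{B}^{+}_{r/2})}$. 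Inserting $\|u\|_{L^\infty(\mbb{B}^{+}_{2r})}\le \|u\|_{L^\infty(\mc{M})}\le e^{C_{R}K}\|u\|_{L^\infty(\mbb{B}^{+}_{R}(x_{0}))}$ with a fixed $R$ does not help either: you end up with $\|u\|_{L^\infty(\mbb{B}^{+}_{3r/5})}\le e^{C'K}\|u\|_{L^\infty(\mbb{B}^{+}_{R})}^{\mu}\|u\|_{L^\infty(\mbb{B}^{+}_{r/2})}^{1-\mu}$, and since $\|u\|_{L^\infty(\mbb{B}^{+}_{R})}\ge \|u\|_{L^\infty(\mbb{B}^{+}_{3r/5})}$ this again collapses to a tautology. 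The three-ball inequality at a single scale $r$ simply carries no information linking $\|u\|_{L^\infty(\mbb{B}^{+}_{2r})}$ to $\|u\|_{L^\infty(\mbb{B}^{+}_{r})}$ from above.

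What the paper does instead is apply the Carleman estimate of Proposition~\ref{local car} not over the short annulus $A_{r/8,\,3r/2}$ (which is what produces the three-ball inequality) but over the \emph{long} annulus $A_{\rho,\,3r/2}$, where $r$ is held fixed once and for all and $\rho$ is the variable small scale. The Carleman lower bound then produces two pieces: one controlling $\|u\|_{\mbb{B}^{+}_{6\rho}}$ at the small scale, and a second controlling $\|u\|_{A_{r/2,\,4r/5}}$ at the fixed scale. Choosing $\tau$ so that this fixed-scale piece absorbs the contribution $e^{\tau\psi_{2}}\|u\|_{\mbb{B}^{+}_{2r}}$ from the outer cutoff region leaves a clean comparison $\|u\|_{\mbb{B}^{+}_{6\rho}}\le e^{CK}\bigl(\|u\|_{\mbb{B}^{+}_{2r}}/\|u\|_{A_{r/2,\,4r/5}}\bigr)^{C}\|u\|_{\mbb{B}^{+}_{3\rho}}$, and the ratio in parentheses is then bounded by $e^{C_{r}K}$ via the Growth Lemma applied at the \emph{fixed} scale $r$. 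The essential mechanism---the simultaneous appearance of a small-scale and a fixed-scale term on the favorable side---comes from running the Carleman weight across both scales at once, and cannot be recovered from the packaged three-ball inequality alone.
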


\begin{proof}
    Without loss of generality, let $x_{0} = 0$ and $r_{0} = 1$.  Fix $r$ such that $0<  2r < r_{0} = 1$. Take $0 < \rho < \frac{r}{12}$.  
Let $u$ be a solution of \eqref{bound PDE}. We introduce a radial smooth cut-off function $\eta(|x|)\in C^{\infty}_{0}(\mbb{B}^{+}_{1})$ such that $0 < \eta < 1$ satisfies $\eta(|x|) = 1$ in $A_{2\rho,r}$, $\eta(|x|) = 0$ outside of $ A_{\rho,\frac{3r}{2}} $, and $|\nabla^{\alpha} \eta(x) | \leq C$ for some constant $C$. Then $\eta u$ is the weak solution of (\ref{new-pde}), Applying the Carleman estimate \eqref{car est} to $\eta u$, we have
\begin{equation*}
\begin{aligned}
       \|r^{2}e^{\tau\psi}(2\nabla \eta \cdot \nabla u + \triangle\eta u)\|_{\mathbb{B}_{1}^{+}}^{2} \geq C\tau^2 \rho\|e^{\tau\psi} r^{-1/2}\eta u\|^{2}_{\mathbb{B}_{1}^{+}}+ C\tau^3 \|e^{\tau\psi} (\ln r)^{-1} \eta u\|^{2}_{\mathbb{B}_{1}^{+}}.
    \end{aligned} 
\end{equation*}
 Using the properties of $\eta$, we  get
\begin{equation*}
\begin{aligned}
       \|re^{\tau\psi}\nabla u\|_{A_{\rho,2\rho}} + \|re^{\tau\psi}\nabla u\|_{A_{r,\frac{3r}{2}}} &+ \|e^{\tau\psi}u\|_{A_{\rho,2\rho}} + \|e^{\tau\psi}u\|_{A_{r,\frac{3r}{2}}} \\&\geq C\tau \rho^{1/2}\|e^{\tau\psi}  r^{-1/2}u\|_{A_{2\rho,6\rho}} + C\tau^{\frac{3}{2}} \|e^{\tau\psi}  (\ln r)^{-1}u\|_{A_{\frac{r}{2}},r}\\
       &\geq  C\tau \|e^{\tau\psi}  u\|_{A_{2\rho,6\rho}} + C\tau^{\frac{3}{2}}\|e^{\tau\psi}  u\|_{A_{\frac{r}{2},r}},
    \end{aligned} 
\end{equation*}
where we used the fact that $r$ is fixed.
We then take a maximum and minimum of $\psi$ and $r$ over each annuli to get
\begin{equation*}
\begin{aligned}
       e^{\tau\psi_{1}}(\|\nabla u\|_{A_{\rho,2\rho}} &+ \|u\|_{A_{\rho,2\rho}}) + e^{\tau\psi_{2}}(\|\nabla u\|_{A_{r,\frac{3r}{2}}} + \|u\|_{A_{r,\frac{3r}{2}}}) \\ &\geq Ce^{\tau\psi_{3}}\|  u\|_{A_{2\rho,6\rho}} + Ce^{\tau\psi_{4}}\|  u\|_{A_{\frac{r}{2},\frac{4r}{5}}}, \\
    \end{aligned} 
\end{equation*}
where $\psi_{1} = \max_{A_{\rho,2\rho}} \psi$, $\psi_{2} = \max_{A_{r,\frac{3r}{2}}} \psi$, $\psi_{3} = \min_{A_{2\rho,6\rho}} \psi$, and $\psi_{4} = \min_{A_{\frac{r}{2}, \frac{4r}{5}}} \psi$. Note that $\psi_{1}-\psi_{3}>0$ and $\psi_{4}-\psi_{2}>0$, which can be chosen to be independent of $r$ or $\rho$.  Applying the Caccioppoli inequality (\ref{corollary Cac})  and enlarging each annulus to a ball gives
\begin{equation*}
\begin{aligned}
       C(1+\|H\|_{L^{\infty}}^{\frac{1}{2}}+\|h\|_{L^{\infty}})(e^{\tau\psi_{1}}\|u\|_{\chball{3\rho}} + e^{\tau\psi_{2}}\|u\|_{\chball{2r}}) \geq e^{\tau\psi_{3}}\|  u\|_{\chball{6\rho}} + e^{\tau\psi_{4}}\|  u\|_{A_{\frac{r}{2},\frac{4r}{5}}},\\
    \end{aligned} 
\end{equation*}
where we have enlarged the right hand side by adding $e^{\tau\psi_{3}}\|u\|_{\chball{2\rho}}$ to both sides. We now want to incorporate the $\|u\|_{\chball{2r}}$ term into the right hand side. To this end, we choose $\tau$ such that
$$
C(1+\|H\|_{L^{\infty}}^{\frac{1}{2}}+\|h\|_{L^{\infty}})e^{\tau\psi_{2}}\|u\|_{\chball{2r}} \leq \frac{1}{2}e^{\tau\psi_{4}}\|  u\|_{A_{\frac{r}{2},\frac{4r}{5}}},  
$$
which holds if
$$
\tau \geq \frac{1}{\psi_{4} - \psi_{2}} \ln{\frac{2C (1+\|H\|_{L^{\infty}}^{\frac{1}{2}}+\|h\|_{L^{\infty}})\|u\|_{\chball{2r}}}{\|  u\|_{A_{\frac{r}{2},\frac{4r}{5}}} }}.
$$
It follows that
\begin{equation}\label{hbd 1}
\begin{aligned}
       C(1+\|H\|_{L^{\infty}}^{\frac{1}{2}}+\|h\|_{L^{\infty}})e^{\tau\psi_{1}}\|u\|_{\chball{3\rho}} \geq e^{\tau\psi_{3}}\|  u\|_{\chball{6\rho}} + e^{\tau\psi_{4}}\|  u\|_{A_{\frac{r}{2},\frac{4r}{5}}}. \\
    \end{aligned} 
\end{equation}
Recall that $\tau > C(1+\|H\|^{\frac{2}{3}}_{L^{\infty}} + \|h\|_{L^{\infty}}^{2})$ is required to apply the Carleman estimate \eqref{car est}.  Thus, we choose
$$
\tau = C(1+\|H\|^{\frac{2}{3}}_{L^{\infty}} + \|h\|_{L^{\infty}}^{2}) + \frac{1}{\psi_{4} - \psi_{2}} \ln{\frac{2C (1+\|H\|_{L^{\infty}}^{\frac{1}{2}}+\|h\|_{L^{\infty}})\|u\|_{\chball{2r}}}{\|  u\|_{A_{\frac{r}{2},\frac{4r}{5}}} }}.$$
Substituting this value of $\tau$ into \eqref{hbd 1} and dropping the second term on the right hand side gives
\begin{equation*}
\begin{aligned}
       e^{C(1+\|H\|^{\frac{2}{3}}_{L^{\infty}} + \|h\|_{L^{\infty}}^{2})}(\frac{\|u\|_{\chball{2r}}}{\|  u\|_{A_{\frac{r}{2},\frac{4r}{5}}}})^{C}\|u\|_{\chball{3\rho}} \geq \|  u\|_{\chball{6\rho}}. \\
    \end{aligned} 
\end{equation*}
Notice that there exists a point $x_{1} \in \partial\mc{M}$ such that $\hball{\frac{r}{4}}{x_{1}} \subset A_{\frac{r}{2},\frac{4r}{5}}$. We apply the growth Lemma \ref{lemma Gro} to see that 
$$
\begin{aligned}
\frac{\|u\|_{\chball{2r}}}{\|  u\|_{A_{\frac{r}{2},\frac{4r}{5}}}} \leq \frac{\|u\|_{\chball{2r}}}{\|  u\|_{\hball{\frac{r}{4}}{x_{1}}}} &\leq e^{C_{r}(1+\|H\|_{L^{\infty}}^{\frac{2}{3}}+\|h\|_{L^{\infty}}^{2})} \frac{\|u\|_{\chball{2r}}}{\|  u\|_{\mc{M}}}\\
& \leq e^{C_{r}(1+\|H\|_{L^{\infty}}^{\frac{2}{3}}+\|h\|_{L^{\infty}}^{2})}.
\end{aligned}
$$
Thus, for $\rho < \frac{r}{12}$ and fixed value of $r$, we obtain that
\begin{equation}
\begin{aligned}
       \|  u\|_{\chball{6\rho}} \leq e^{C(1+\|H\|^{\frac{2}{3}}_{L^{\infty}} + \|h\|_{L^{\infty}}^{2})}\|u\|_{\chball{3\rho}}.\label{dou-1}
    \end{aligned} 
\end{equation}
If $\rho \geq \frac{r}{12}$, since $r$ is fixed, we can get directly from the growth lemma
that
\begin{equation}
\begin{aligned}
       e^{-C_{r}(1+\|H\|^{\frac{2}{3}}_{L^{\infty}} + \|h\|_{L^{\infty}}^{2})}\|  u\|_{\chball{6\rho}} &\leq e^{-C_{r}(1+\|H\|^{\frac{2}{3}}_{L^{\infty}} + \|h\|_{L^{\infty}}^{2})}\|  u\|_{\mc{M}}  \\
       &\leq \|u\|_{\chball{\frac{r}{12}}} \leq \|u\|_{\chball{3\rho}}.\label{dou-2}
    \end{aligned} 
\end{equation}
The combination of \eqref{dou-1} and \eqref{dou-2} yields the desired doubling inequality in the Proposition.
\end{proof}

\section{Boundary Doubling inequality}
In order to show the boundary doubling inequality, we need the following global Carleman estimate involving the boundary.  We choose a weight function
\begin{align}
   \psi (x) = e^{sm(x)} 
   \label{weight-1}
\end{align}
where $m(x) = -|x - b|,$ and $b = (0,...,0,-b_{n})$ with  some small positive $b_{n} > 0$.

\begin{proposition}\label{prop GCE}
 Let $s$ be a fixed large constant and $v\in H^1(\mbb{B}^{+}_{2})$ is the solution of 
 \begin{equation}
    \begin{cases}
        \Delta_{g} v = F&  \hspace{0.5cm} in \hspace{0.1cm} \mathbb{B}_{2}^{+}, \\
         \frac{\partial v} {\partial {\nu}} = g& \hspace{0.5cm} on \hspace{0.1cm} \partial\mathbb{B}_{2}^{+}\cap \{x  |  x_{n} = 0\} 
         \label{des-1}
    \end{cases}
\end{equation}
 with $\supp v\subset \mbb{B}^{+}_{2}$, $F\in L^2(\mathbb{B}_{2}^{+})$, and $g\in L^2(\partial\mathbb{B}_{2}^{+}\cap \{x  |  x_{n} = 0\}).$   Then there exists a positive constant $C_{0}$ depending on $s$ such that 
\begin{equation}\label{carleman}
\begin{aligned}
     &\|e^{\tau\psi} F\|_{L^{2}(\mbb{B}^{+}_{2})}+ \tau^{\frac{3}{2}}s^{2}\|\psi^{\frac{3}{2}}e^{\tau\psi}v\|_{L^{2}(\partial\mbb{B}^{+}_{2}\cap \{x  |  x_{n} = 0\})} + \tau^{\frac{1}{2}}\|e^{\tau\psi}g\|_{L^{2}(\partial\mbb{B}^{+}_{2}\cap \{x  |  x_{n} = 0\})} \\
   & + \tau^{\frac{1}{2}}s\|\psi^{\frac{1}{2}}e^{\tau\psi}\nabla' v\|_{L^{2}(\partial\mbb{B}^{+}_{2}\cap \{x  |  x_{n} = 0\})}
    \\
    &\geq C_{0}\tau^{\frac{3}{2}}s^{2}\|\psi^{\frac{3}{2}}e^{\tau\psi}v\|_{L^{2}(\mbb{B}^{+}_{2})} + C_{0}\tau^{\frac{1}{2}}s\|\psi^{\frac{1}{2}}e^{\tau\psi}\nabla v\|_{L^{2}(\mbb{B}^{+}_{2})}.
    \end{aligned}
\end{equation}
\end{proposition}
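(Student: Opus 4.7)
The plan is a standard conjugation-plus-commutator argument, but with careful bookkeeping of the boundary terms on $\{x_n=0\}$ since $v$ is not compactly supported there. First, set $w = e^{\tau\psi} v$. A direct computation shows that the conjugated operator
$$P w := e^{\tau\psi}\,\Delta_g(e^{-\tau\psi} w) = \Delta_g w - 2\tau \nabla\psi\cdot\nabla w + \tau^{2}|\nabla\psi|^{2}_{g} w - \tau (\Delta_g\psi)\, w$$
satisfies $Pw = e^{\tau\psi} F$ in $\mathbb{B}^{+}_{2}$, and the Neumann condition $\partial_\nu v = g$ (with $\nu = -e_n$, so $\partial_\nu = -\partial_n$) translates into
$$\frac{\partial w}{\partial \nu} = e^{\tau\psi} g + \tau\,\frac{\partial \psi}{\partial \nu}\, w \qquad \text{on } \partial\mathbb{B}^{+}_{2}\cap\{x_n=0\}.$$
I then split $P = P_+ + P_-$ into its formal symmetric and skew-symmetric parts
$$P_+ = \Delta_g + \tau^{2}|\nabla\psi|^{2}_{g},\qquad P_- = -2\tau\,\nabla\psi\cdot\nabla - \tau\,\Delta_g\psi,$$
and expand $\|Pw\|^{2} \geq 2\,\mathrm{Re}\,\langle P_+ w, P_- w\rangle_{L^{2}(\mathbb{B}^{+}_{2})}$.

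The heart of the argument is to integrate by parts in the cross term $2\,\mathrm{Re}\,\langle P_+ w, P_- w\rangle$. The bulk contribution produces the commutator-type expression, in which the key positive terms come from $D^{2}\psi = s^{2}\psi\,\nabla m\otimes\nabla m + s\psi\, D^{2}m$ and the fact that $|\nabla m|_g = 1$. The leading bulk terms one obtains are of the form
$$4\tau^{3} s^{4}\!\int_{\mathbb{B}^{+}_{2}}\! \psi^{3}|w|^{2}\,d\mu_g \;+\; 4\tau s^{2}\!\int_{\mathbb{B}^{+}_{2}}\! \psi\,|\nabla m\cdot\nabla w|^{2}\,d\mu_g,$$
together with lower-order commutator contributions controlled by a large choice of $s$ first and then $\tau$. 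Combining with $\|P_+ w\|^{2}$, which controls the full gradient through $\Delta_g w$, yields the interior lower bound $C\tau^{3} s^{4}\|\psi^{3/2} w\|^{2} + C\tau s^{2}\|\psi^{1/2}\nabla w\|^{2}$ up to boundary terms.

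Every integration by parts generates a boundary integral on $\{x_n=0\}$. A careful accounting shows these have the schematic form
$$\tau^{3}s^{3}\!\int\! \psi^{3}(\partial_n m)|w|^{2},\quad \tau s\!\int\! \psi(\partial_n m)|\nabla' w|^{2},\quad \tau\!\int\! (\partial_n m)|\partial_n w|^{2},\quad \text{and cross terms}.$$
Because $\partial_n m = -(x_n+b_n)/|x-b|<0$ at $x_n=0$, the pure $|w|^{2}$ and $|\nabla' w|^{2}$ boundary terms coming with a minus sign have the \emph{wrong} sign and must be moved to the left, where they become precisely the boundary quantities $\tau^{3/2} s^{2}\|\psi^{3/2} e^{\tau\psi} v\|_{L^{2}(\partial)}$ and $\tau^{1/2} s\|\psi^{1/2} e^{\tau\psi}\nabla' v\|_{L^{2}(\partial)}$ listed in \eqref{carleman}. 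Every remaining boundary term containing $\partial_n w$ is replaced, using the conjugated Neumann identity above, by the combination $e^{\tau\psi} g + \tau(\partial_n\psi) w$, and then Cauchy--Schwarz distributes it among $\tau^{1/2}\|e^{\tau\psi}g\|_{L^{2}(\partial)}$ and the previous two boundary norms of $v$.

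The main obstacles I anticipate are two. First, the bookkeeping of the cross terms in $\langle P_+ w, P_- w\rangle$: moving $\nabla\psi\cdot\nabla$ against $\Delta$ produces several boundary pieces involving $\partial_n w$, $\nabla' w$, and $w$ whose signs must be matched against the intrinsic boundary terms that appear on the right-hand side of \eqref{carleman}. Second, one must exploit pseudoconvexity at $\{x_n=0\}$ in exactly the right way: the choice $b = (0,\ldots,0,-b_n)$ with $b_n>0$ forces $\partial_n m<0$ at the boundary, which is the sign condition that allows the bad boundary contributions to be absorbed by the Neumann data after passing back to $v$. Once $s$ is taken large so that the $s^{2}\psi\,|\nabla m|^{2}$ term dominates the $s\psi\, D^{2}m$ terms in the commutator, and $\tau$ is then taken large so that the interior positive terms absorb all zeroth- and first-order errors, undoing the substitution $w = e^{\tau\psi} v$ gives exactly \eqref{carleman}.
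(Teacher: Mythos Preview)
Your proposal outlines the standard conjugation--commutator derivation, which is essentially the content of the references the paper invokes (\cite{Z23}, \cite{LR95}). The paper itself does \emph{not} re-derive the estimate: it simply quotes \eqref{carleman} for smooth $v_k\in C_0^\infty(\mathbb{B}_2^+)$ from those references, and then passes to the limit using the $H^{3/2}$ regularity bound of Lemma~\ref{lem-re} together with the trace inequality~\eqref{trace}. So your route is more self-contained, whereas the paper's is a two-line reduction to known literature plus a density argument.

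The gap in your sketch is precisely the regularity issue that motivates the paper's approximation step. Your integrations by parts in $2\,\mathrm{Re}\,\langle P_+ w, P_- w\rangle$---in particular pairing $\Delta_g w$ against $\nabla\psi\cdot\nabla w$ and reading off boundary traces of $\partial_n w$---require $w\in H^2$. Under the stated hypotheses $v\in H^1(\mathbb{B}_2^+)$ with $g$ merely in $L^2$ on the flat boundary, so $v$ is at best $H^{3/2}$ (Lemma~\ref{lem-re}) and these manipulations are not directly justified; indeed the paper explicitly flags this (``Since $v$ is $H^{3/2}$ \ldots we show the argument by approximation''). The remedy is to approximate $F,g$ by smooth $F_k,g_k$, run your commutator computation for the resulting smooth solutions $v_k$, and then let $k\to\infty$ using $\|v_k-v\|_{H^{3/2}}\to 0$ and the trace inequality to pass each term of \eqref{carleman} to the limit. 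With that step prepended, your argument is complete and reproduces what the cited references contain.
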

Note that $\nabla'$ is the derivative for $x'=(x_1, x_2, \cdots, x_{n-1})$ on  $\partial\mathbb{B}_{2}^{+}\cap \{x  |  x_{n} = 0\}$. The proof of the Carleman estimates (\ref{carleman}) will be given in section 4. Thanks to this Carleman estimate, we can show a Cauchy uniqueness result (i.e. two half-ball and one lower dimensional ball inequality inequality).

\begin{lemma}\label{3lower}
Let $u$ be a solution of \eqref{bound PDE} in $\mbb{B}^{+}_{1/2}$. Denote the lower dimensional ball on the boundary
$$
\mathbf{B}_{1/3} = \{(x',0)\in\mbb{R}^{n}|x'\in\mbb{R}^{n-1}, |x'| < 1/3\}.
$$
Then there exists a constant $0<\kappa<1$ such that
\begin{equation}\label{three ball}
    \| u\|_{L^{2}(\mbb{B}^{+}_{1/12})} \leq e^{C(1+\|H\|^{\frac{2}{3}}_{L^{\infty}} +\|h\|^{{2}}_{L^{\infty}})}\|u\|_{L^{2}(\mbb{B}^{+}_{1/2})}^{\kappa} (\| u\|_{H^{1}(\mathbf{B_{1/3}})} )^{1-\kappa}.
\end{equation}

\end{lemma}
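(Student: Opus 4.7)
The plan is to combine the lifting sketched in the introduction with the global Carleman estimate~\eqref{carleman} and to optimize the Carleman parameter $\tau$, so that the resulting three-term bound becomes a Cauchy-type interpolation. First, pick a function $w$ on a neighborhood of $\overline{\mathbb{B}^{+}_{1/2}}$ with $\partial_\nu w = h$ on $\{x_n=0\}$ and $\|\nabla w\|_{L^\infty}\lesssim\|h\|_{L^\infty}$, and set $\bar u := e^{-w}u$. A direct computation yields
\[
\Delta\bar u + 2\nabla w\cdot\nabla\bar u = \tilde H\,\bar u \quad \text{in }\mathbb{B}^{+}_{1/2},\qquad \partial_\nu\bar u = 0 \quad \text{on }\mathbf{B}_{1/2},
\]
where $\tilde H := H - \Delta w - |\nabla w|^2$ with $\|\tilde H\|_{L^\infty}\lesssim\|H\|_{L^\infty}+\|h\|_{L^\infty}^{2}$. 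Since $|\bar u|$ and $|u|$ are comparable, it suffices to prove~\eqref{three ball} with $\bar u$ on the left and $u$ on the right.

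Next, choose a product cutoff $\eta = \eta_1(x')\eta_2(x_n)\in C_c^\infty(\mathbb{B}^{+}_{1/2})$ with $\eta\equiv 1$ on a neighborhood of $\mathbb{B}^{+}_{1/12}$, with $\supp\eta_1\subset\{|x'|<1/3\}$ (so the boundary trace of $\eta$ lives in $\mathbf{B}_{1/3}$), and with $\eta_2$ extended evenly in $x_n$ so that $\partial_{x_n}\eta|_{x_n=0}=0$. Then $\bar v := \eta\bar u$ satisfies $\partial_\nu\bar v = 0$ on the flat face and
\[
\Delta\bar v + 2\nabla w\cdot\nabla\bar v - \tilde H\bar v = R,
\]
where the remainder $R$ is supported in $\supp\nabla\eta$ with $|R|\lesssim|u|+|\nabla u|$. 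Applying Proposition~\ref{prop GCE} to $\bar v$ with $g=0$ and $F = R - 2\nabla w\cdot\nabla\bar v + \tilde H\bar v$, the drift term is absorbed into the right-hand side gradient piece of~\eqref{carleman} provided $\tau\gtrsim\|\nabla w\|_{L^\infty}^2\lesssim\|h\|_{L^\infty}^2$, and the $\tilde H$-multiplier term is absorbed into the $\tau^{3/2}s^2\|\psi^{3/2}e^{\tau\psi}\bar v\|$ piece provided $\tau\gtrsim\|\tilde H\|_{L^\infty}^{2/3}$. Setting $\tau_0 := C(1+\|H\|_{L^\infty}^{2/3}+\|h\|_{L^\infty}^{2})$ makes both absorptions valid.

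Let $\psi_1 := \min_{\mathbb{B}^{+}_{1/12}}\psi$, $\psi_2 := \max_{\supp\nabla\eta}\psi$ and $\psi_3 := \max_{\mathbf{B}_{1/3}}\psi$. By choosing the parameter $b_n$ in~\eqref{weight-1} small and keeping $\nabla\eta$ away from $\{x_n=0\}$, one arranges $\psi_1>\psi_2$; set $\alpha := \psi_1-\psi_2>0$ and $\beta := \psi_3-\psi_1>0$. Controlling $\|e^{\tau\psi}R\|$ by the Caccioppoli inequality~(\ref{corollary Cac}) and the boundary data of $\bar v$ and $\nabla'\bar v$ in~\eqref{carleman} by $\|u\|_{H^1(\mathbf{B}_{1/3})}$ (since $\bar v|_{\partial}$ is supported in $\mathbf{B}_{1/3}$ and pointwise comparable to $u$), one arrives at
\[
\|u\|_{L^2(\mathbb{B}^{+}_{1/12})} \leq C e^{-\tau\alpha}\|u\|_{L^2(\mathbb{B}^{+}_{1/2})} + C e^{\tau\beta}\|u\|_{H^1(\mathbf{B}_{1/3})}
\]
for every $\tau\geq\tau_0$, with all polynomial-in-$\tau$ factors and the $1+\|H\|_{L^\infty}^{1/2}+\|h\|_{L^\infty}$ from Caccioppoli absorbed into $e^{C\tau_0}$.

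Finally, optimize $\tau$. Take $\tau^* := (\alpha+\beta)^{-1}\ln(\|u\|_{L^2(\mathbb{B}^{+}_{1/2})}/\|u\|_{H^1(\mathbf{B}_{1/3})})$. If $\tau^*\geq\tau_0$, the choice $\tau=\tau^*$ balances the two summands and gives~\eqref{three ball} with $\kappa = \beta/(\alpha+\beta)\in(0,1)$. Otherwise $\|u\|_{L^2(\mathbb{B}^{+}_{1/2})}\leq e^{(\alpha+\beta)\tau_0}\|u\|_{H^1(\mathbf{B}_{1/3})}$, and the trivial inequality $\|u\|_{L^2(\mathbb{B}^{+}_{1/12})}\leq\|u\|_{L^2(\mathbb{B}^{+}_{1/2})}$ combined with this ratio produces the same interpolation with constant swollen to $e^{C\tau_0} = e^{C(1+\|H\|_{L^\infty}^{2/3}+\|h\|_{L^\infty}^{2})}$. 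The main obstacle is the pseudoconvex separation $\psi_1>\psi_2$: because the focus $b$ in $\psi = e^{-s|x-b|}$ sits just below the flat face, points in the cutoff annulus that lie near $\{x_n=0\}$ can be nearly as close to $b$ as the apex of $\mathbb{B}^{+}_{1/12}$, which forces the product structure of $\eta$ with the tangential piece $\eta_1$ held constant throughout the boundary strip closest to the origin.
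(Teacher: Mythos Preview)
Your approach has a genuine regularity gap. You assume the existence of a lifting function $w$ with $\partial_\nu w = h$ on the flat face and $\|\nabla w\|_{L^\infty}\lesssim\|h\|_{L^\infty}$, and you further need $\Delta w\in L^\infty$ so that $\tilde H = H-\Delta w-|\nabla w|^2$ is bounded. For merely bounded $h$ no such $w$ is available: the harmonic extension used in the paper satisfies only $\|w\|_{W^{1,p}}\lesssim\|h\|_{L^\infty}$ for every finite $p$ (see \eqref{regu-w}), and in general $\nabla w$ blows up near discontinuities of $h$. Without $\nabla w\in L^\infty$ you cannot absorb the drift term $2\nabla w\cdot\nabla\bar v$ into the gradient piece of the Carleman estimate, because that absorption relies on the pointwise bound $\|e^{\tau\psi}\nabla w\cdot\nabla\bar v\|_{L^2}\le\|\nabla w\|_{L^\infty}\|e^{\tau\psi}\nabla\bar v\|_{L^2}$; the Carleman inequality gives only $L^2$ control of $e^{\tau\psi}\nabla\bar v$, so a H\"older splitting with $\|\nabla w\|_{L^p}$ is not an option either.

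The paper avoids this entirely by applying Proposition~\ref{prop GCE} directly to $\eta u$ with a \emph{radial} cutoff $\eta(|x|)$ and nonzero Neumann data $g=h\eta u$. The boundary contribution $\tau^{1/2}\|e^{\tau\psi}h\eta u\|$ in \eqref{carleman} is then absorbed into the term $\tau^{3/2}\|e^{\tau\psi}\eta u\|$ on the same boundary just by taking $\tau\gtrsim\|h\|_{L^\infty}$; the lifting is not needed here at all (it appears only later, in the proof of Theorem~\ref{thm 1}, where $L^p$ rather than $L^\infty$ control of $\nabla w$ suffices because no Carleman absorption is involved). Your product cutoff also fails the separation $\psi_1>\psi_2$, exactly as you diagnose in your last paragraph: points $(x',0)$ with $|x'|$ just beyond $1/12$ lie in $\supp\nabla'\eta_1$ and are strictly closer to $b$ than the apex $(0,1/12)$ of $\mathbb{B}^+_{1/12}$. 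The radial cutoff resolves this because its gradient lives in $\{1/8<|x|<1/4\}$, whose closest point to $b$ has distance $\sqrt{(1/8)^2+b_n^2}>1/12+b_n$.
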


\begin{proof}[Proof of Lemma \ref{3lower}]
Take a smooth radial cutoff function $\eta$ such that $\eta(|x|) = 1$ in $\mbb{B}^{+}_{1/8}$, $\eta(|x|) = 0$ outside of $\mbb{B}^{+}_{1/4}$, and $|\nabla \eta(|x|) | \leq C$ for some constant $C$.  Let $u$ is the solution of  \eqref{bound PDE} in $\mathbb{B}_{2}^+$, then  $w=\eta u$ is the weak solution of 
\begin{equation}\label{new-pde-1}
     \begin{cases} 
         \Delta w = 2\nabla u\cdot\nabla \eta+\triangle\eta u+H(x)w&  \hspace{0.5cm} in \hspace{0.1cm} \mathbb{B}_{2}^{+}, \\
        \frac{\partial w}{ \partial{\nu} } = h(x) w& \hspace{0.5cm} on \hspace{0.1cm} \partial\mathbb{B}_{2}^{+}\cap \{x \hspace{0.1cm} | \hspace{0.1cm} x_{n} = 0\}.
   \end{cases}
\end{equation}
Notice that $s$ is fixed and $\psi$ is bounded above and below.  Applying the Carleman estimates (\ref{carleman}) 
with $F=2\nabla u\cdot\nabla \eta+\triangle\eta u+H(x)w$ and $g= h(x) w $, we get
\begin{equation}\label{eta carl}
\begin{aligned}
    \|e^{\tau\psi}(2\nabla u\cdot\nabla \eta&+\triangle\eta u+H(x)w )\|_{L^{2}(\mbb{B}^{+}_{1/4})} + \tau^{\frac{1}{2}}\|e^{\tau\psi} hw\|_{L^{2}(\mathbf{B_{1/4}})}\\
   & + \tau^{\frac{1}{2}}\|e^{\tau\psi} \nabla' w\|_{L^{2}(\mathbf{B_{1/4}})}+ \tau^{\frac{3}{2}}\|e^{\tau\psi} w\|_{L^{2}(\mathbf{B_{1/4}})}\\
    &\geq C\tau^{\frac{3}{2}}\|e^{\tau\psi} w\|_{L^{2}(\mbb{B}^{+}_{1/4})}.
    \end{aligned}
\end{equation}
Choosing 
\begin{align} \label{tau-1}
\tau > C(1 + \|H\|^{\frac{2}{3}}_{L^{\infty}} +\|h\|_{L^{\infty}}),
\end{align}
we obtain that
\begin{equation*}
\begin{aligned}
    \|e^{\tau\psi}(2\nabla u\cdot\nabla \eta&+\triangle\eta u)\|_{L^{2}(\mbb{B}^{+}_{1/4})} + 
     \tau^{\frac{1}{2}}\|e^{\tau\psi} \nabla' u\|_{L^{2}(\mathbf{B_{1/4}})}+ \tau^{\frac{3}{2}}\|e^{\tau\psi} u\|_{L^{2}(\mathbf{B_{1/4}})}\\
    &\geq C\tau^{\frac{3}{2}}\|e^{\tau\psi} u\|_{L^{2}(\mbb{B}^{+}_{1/8})}.
    \end{aligned}
\end{equation*}
From the properties of $\eta$, we get
\begin{equation}\label{bounding 5}
\begin{aligned}
    \|e^{\tau\psi}u\|_{L^{2}(\mbb{B}^{+}_{1/4} \setminus \mbb{B}^{+}_{1/8})}& + \|e^{\tau\psi}\nabla u\|_{L^{2}(\mbb{B}^{+}_{1/4}\setminus \mbb{B}^{+}_{1/8})}   + \tau^{\frac{3}{2}}\|e^{\tau\psi} u\|_{L^{2}(\mathbf{B_{1/4}})}
     \\&+ \tau^{\frac{1}{2}}\|e^{\tau\psi} \nabla' u\|_{L^{2}(\mathbf{B_{1/4}})}\\
   & \geq C\tau^{\frac{3}{2}}\|e^{\tau\psi} u\|_{L^{2}(\mbb{B}^{+}_{1/8})}.
    \end{aligned}
\end{equation}
We aim to find the maximum and minimum of $\psi$ on left hand side and right hand side of \eqref{bounding 5}. By definition of $\psi$ and choosing $b_n$ appropriately small, we see that $\max_{\mbb{B}^{+}_{1/4} \setminus \mbb{B}^{+}_{1/8} }(m(x)) = -\sqrt{ (\frac 1 {8})^2+b^2_{n}}\leq -(\frac{1}{10}+b_n))$  and $\min_{\mbb{B}^{+}_{1/12}}(m(x)) = -(\frac{1}{12} + b_{n})$.
Plugging in these bounds, we arrive at
\begin{equation}\label{bounding 6}
\begin{aligned}
    e^{\tau e^{-s(\frac 1 {10}+b_{n})}}\|u\|_{L^{2}(\mbb{B}^{+}_{1/4})} + e^{\tau e^{-s(\frac 1 {10}+b_{n})}}\|\nabla u\|_{L^{2}(\mbb{B}^{+}_{1/4})} &+ \tau^{\frac{3}{2}}e^{\tau}\| u\|_{L^{2}(\mathbf{B_{1/4}})} \\
    &+ \tau^{\frac{1}{2}}e^{\tau}\| \nabla' u\|_{L^{2}(\mathbf{B_{1/4}})}\\
    &\geq C\tau^{\frac{3}{2}}e^{\tau e^{-s(\frac{1}{12} + b_{n})}}\| u\|_{L^{2}(\mbb{B}^{+}_{1/12})}. 
    \end{aligned}
\end{equation}
The  Caccioppoli inequality from Lemma \ref{corollary Cac} yields that
\begin{equation}\label{bounding 7}
\begin{aligned}
    e^{\tau e^{-s(\frac 1 {10}+b_{n})}}\|u\|_{L^{2}(\mbb{B}^{+}_{1/2})} + e^{\tau}\| u\|_{L^{2}(\mathbf{B_{1/3}})}
    + e^{\tau}\| \nabla' u\|_{L^{2}(\mathbf{B_{1/3}})}\\
    \geq Ce^{\tau e^{-s(\frac{1}{12} + b_{n})}}\| u\|_{L^{2}(\mbb{B}^{+}_{1/12})}. 
    \end{aligned}
\end{equation}
Let
$$
B_{1} = \|u\|_{L^{2}(\mbb{B}^{+}_{1/2})},
$$
$$
B_{2} = \| u\|_{L^{2}(\mathbf{B_{1/3}})}
    +\|\nabla' u\|_{L^{2}(\mathbf{B_{1/3})}},
$$
$$
B_{3} = \| u\|_{L^{2}(\mbb{B}^{+}_{1/12})}.
$$
Set $p_{0} =  e^{ e^{-s(\frac 1 {10}+b_{n})}} - e^{-s(\frac{1}{12}+b_{n})}$ and $p_{1} = 1 - e^{-s(\frac{1}{12} + b_{n})}$. Note that $p_0<0$ and $p_1>0$. We  get
\begin{equation}\label{B ineq}
    e^{\tau p_{0}}B_{1} + e^{\tau p_{1}}B_{2} \geq CB_{3}.
\end{equation}
To incorporate the $B_{1}$ term into the right hand side, we take
$$
e^{\tau p_{0}}B_{1} \leq \frac{1}{2}CB_{3}.
$$
Then we need
\begin{align}\label{english}
\tau \geq \frac{1}{p_0}\ln (\frac{CB_{3}}{2B_{1}}).
\end{align}
By \eqref{B ineq}, for such $\tau$, we have that 
\begin{equation}\label{B2 B3 ineq}
    e^{\tau p_1}B_{2} \geq \frac{1}{2}CB_{3}.
\end{equation}
Recalling the assumptions for $\tau$ in (\ref{tau-1}) and in (\ref{english}), we take
\begin{equation*}
    \tau = C(1+\|H\|^{\frac{2}{3}}_{L^{\infty}} +\|h\|_{L^{\infty}}) + \frac{1}{p_0}\ln (\frac{CB_{3}}{2B_{1}}).
\end{equation*}
Substituting this choice of $\tau$ into \eqref{B2 B3 ineq} to get
\begin{equation*}
    e^{C(1+\|H\|^{\frac{2}{3}}_{L^{\infty}} +\|h\|_{L^{\infty}})}B_{1}^{\frac{p_1}{p_1 - p_0}} B_{2}^{\frac{-p_0}{p_1 - p_0}} \geq CB_{3}.
\end{equation*}
Let $\kappa = \frac{p_1}{p_1 - p_0}$.  Then,  the three-ball type inequality follows 
\begin{equation}\label{3ball-ish}
    \| u\|_{L^{2}(\mbb{B}^{+}_{1/12})} \leq e^{C(1+\|H\|^{\frac{2}{3}}_{L^{\infty}} +\|h\|_{L^{\infty}})}\|u\|_{L^{2}(\mbb{B}^{+}_{1/2})}^{\kappa} (\| u\|_{L^{2}(\mathbf{B_{1/3}})}
    +\|\nabla' u\|_{L^{2}(\mathbf{B_{1/3})}})^{1-\kappa}.
\end{equation}
  Therefore, the lemma is finished.
\end{proof}

Thanks to the quantitative Cauchy uniqueness, we are able to present the proof of Theorem \ref{thm 1}. However, we will need an interpolation estimate (\ref{interp ineq}) which requires $H^2$ estimate near the boundary.  Because of the lack of $H^2$ regularity for $u$ in (\ref{bound PDE}), we employ a lifting argument to consider a new elliptic equation with zero Neumann boundary condition. The $H^2$ estimate can be achieved for the solutions of the new elliptic equation.


\begin{proof}[Proof of Theorem \ref{thm 1}]

We consider the following auxiliary function 
\begin{equation*}
     \begin{cases} 
      \triangle w(x)=0 \quad & x\in  \mathbb B^+_4, \\
    \frac{\partial  w }{\partial \nu}= \hat{h}(x), & x\in \partial\mathbb B^+_4,
  \end{cases}
\end{equation*}
where $\hat{h}$ is given as
\begin{equation*}
   \hat{h}(x)=  \left\{ 
   \begin{array}{lll}
     h(x) \quad \mbox{in}\   \bar{\mathbb B}^+_4\cap \{x|x_n=0\}, \\
   \frac{-1}{|\partial \bar{\mathbb B}^+_4\backslash \{x|x_n=0\}|  }\int_{\partial \bar{\mathbb B}^+_4\cap \{x|x_n=0\}}  h(x) dS \quad \mbox{elsewhere on} \ \partial\mathbb B^+_4.
   \end{array}
  \right.
\end{equation*}
The existence of $w$ is known, see e.g. \cite{JK81}.
Since   $\|\hat{h}(x)\|_{L^\infty (\partial \mathbb B^+_4) }\leq \|{h}(x)\|_{L^\infty (\partial\mathbb B^+_4\cap \{x|x_n=0\}) }$, by elliptic estimates, it holds that
\begin{align}
    \|w\|_{W^{1,p}(\mathbb B^+_4) }\leq C\|{h}(x)\|_{L^\infty (\partial \mathbb B^+_4\cap\{x|x_n=0\}) }
   \label{regu-w}
\end{align}
for any $1<p<\infty$. Furthermore, Lemma \ref{lem-re} shows that
\begin{align}
    \|w\|_{H^{\frac{3}{2}}(\mathbb B^+_4) }\leq C \|\hat{h}(x)\|_{L^2 (\partial \mathbb B^+_4) }\leq C\|{h}(x)\|_{L^\infty (\partial \mathbb B^+_4 \cap\{x|x_n=0\}) }.
   \label{regu-w-1}
\end{align}

From (\ref{regu-w}), the Sobolev imbedding theorem gives that
\begin{align}
    \|w\|_{L^\infty(\mathbb B^+_4) }\leq C\|{h}(x)\|_{L^\infty (\partial\mathbb B^+_4\cap\{x|x_n=0\} ) }
    \label{infty-1}.
\end{align}

We construct an auxiliary function
\begin{align} 
\bar u= e^{-w(x)}u.
\label{rela-u}\end{align} We can check that
\begin{align*}
    \frac{\partial \bar u}{\partial \nu}=0 \quad \mbox{on} \ \bar{\mathbb B}^+_2\cap \{x|x_n=0\}.
\end{align*}
We can also show that $\bar u$ is the weak solution of 
\begin{align*}
    \triangle \bar u+2\nabla w\cdot \nabla \bar u +|\nabla w|^2 \bar u=H(x)\bar u \quad \mbox{in} \ {\mathbb B}^+_2.
\end{align*}
Thus, $\bar u$ satisfies the equation 
\begin{equation} 
    \begin{cases} 
      \triangle \bar u+2\nabla w\cdot \nabla \bar u +|\nabla w|^2 \bar u=H(x)\bar u \quad \mbox{in} \ {\mathbb B}^+_2, \\
    \frac{\partial \bar u}{\partial \nu}=0 \quad \mbox{on} \ \bar{\mathbb B}^+_2\cap \{x|x_n=0\}.
   \end{cases}
  \label{bar-u}
\end{equation}
By elliptic estimates, the solution $u$ in (\ref{bound PDE}) in ${\mathbb B}^+_2$ satisfies 
\begin{align}
    \|u\|_{W^{1, p}(\mathbb B^+_1)}\leq C(\|H\|_\infty,  \|h\|_\infty, p)\|u\|_{L^2(\mathbb B^+_2)}
   \label{p-regu}
\end{align}
 for any $1<p<\infty$, where $C(\|H\|_\infty,  \|h\|_\infty, p)$ depends on $\|H\|_\infty,  \|h\|_\infty$ and $p$.
From (\ref{p-regu}), (\ref{regu-w}) and the definition of $\bar u$, we get that $2\nabla w\cdot \nabla \bar u +|\nabla w|^2 \bar u-H(x)\bar u \in L^2(\mathbb B^+_2).$ Therefore, $\bar u \in H^2(\mathbb B^+_2)$. Next we show the $H^2$ norm of $\bar u$ is bounded.

By elliptic estimates, (\ref{rela-u}) and rescaling argument for (\ref{p-regu}), we can show that
\begin{align}
   \|\bar u\|_{L^\infty(\mathbb B^+_1)}&\leq e^{\|h\|_\infty} \| u\|_{L^\infty(\mathbb B^+_1)} \nonumber \\
  & \leq C (\|H\|_\infty^{\frac{n}{2}}+\|h\|_\infty^{n}+1) e^{\|h\|_\infty} \| u\|_{L^2(\mathbb B^+_2)} \nonumber \\
 &\leq  C (\|H\|_\infty^{\frac{n}{2}}+\|h\|_\infty^{n}+1) e^{C\|h\|_\infty} \| \bar u\|_{L^\infty(\mathbb B^+_2)}.
 \label{324}
\end{align}
Hence, it follows from (\ref{regu-w}) that
\begin{align*}
     \||\nabla w|^2\bar u\|_{L^2(\mathbb B^+_1)}\leq \| \bar u\|_{L^\infty(\mathbb B^+_2)}\|\nabla w\|^2_{L^4(\mathbb B^+_2)} \leq \| \bar u\|_{L^\infty(\mathbb B^+_2)} \|h\|^2_\infty.
\end{align*}
From the assumption of $H$, it holds that
\begin{align*}
 \|H(x)\bar u \|_{L^2(\mathbb B^+_1)}\leq  \|H\|_\infty \|\bar u \|_{L^2(\mathbb B^+_2)}.
\end{align*}

By the rescaling arguments for (\ref{p-regu}), covering arguments,  and elliptic estimates,
we can obtain that
\begin{align*}
    \|\nabla u\|_{L^4(\mathbb B^+_1)}\leq C (\|H\|_\infty^{\frac{1}{2}}+\|h\|_\infty+1)^{2n+1}   \| u\|_{L^2(\mathbb B^+_2)}.
\end{align*}
Thanks to (\ref{regu-w}), the relation of $\bar u$ and $u$ in (\ref{rela-u}), and the last inequality, we derive that
\begin{align*}
\|\nabla \bar u\cdot \nabla w\|_{L^2(\mathbb B^+_1) }&\leq \|\nabla \bar u\|_{L^4(\mathbb B^+_1) } \|\nabla w\|_{L^4(\mathbb B^+_1) } \nonumber \\
&\leq C\|\nabla \bar u\|_{L^4(\mathbb B^+_1) } \|h\|_\infty \nonumber \\
&\leq C\|\nabla u e^{-w}- u e^{-w} \nabla w\|_{L^4(\mathbb B^+_1) } \|h\|_\infty \nonumber \\
&\leq e^{C(\|h\|_\infty+1)} (\|\nabla u \|_{L^4(\mathbb B^+_1)}+\|u \nabla w\|_{L^4(\mathbb B^+_1)} ) \nonumber \\
&\leq e^{C(\|h\|_\infty+1)} (\|\nabla u \|_{L^4(\mathbb B^+_1)}+\|u\|_{L^\infty(\mathbb B^+_1)} \|\nabla w\|_{L^4(\mathbb B^+_1)} ) \nonumber \\
&\leq e^{C(\|h\|_\infty+1)} \big(\|H\|_\infty^{\frac{1}{2}}+\|h\|_\infty+1)^{2n+1}   \| u\|_{L^2(\mathbb B^+_2)}   +\|u\|_{L^2(\mathbb B^+_2) }\|h\|_\infty \big)\nonumber \\
&\leq   e^{C(\|H\|^{\frac{1}{2}}_\infty +\|h\|_\infty+1)}  \| u\|_{L^2(\mathbb B^+_2)} \nonumber \\
&\leq  e^{C (\|H\|^{\frac{1}{2}}_\infty +\|h\|_\infty+1)}  \| \bar u\|_{L^2(\mathbb B^+_2)}.
\end{align*}
Therefore, by the elliptic estimates for (\ref{bar-u}), we  obtain that
\begin{align}
    \|\bar u\|_{H^2(\mathbb B^+_1) }\leq C e^{(\|H\|^{\frac{1}{2}}_\infty +\|h\|_\infty+1)}  \| \bar u\|_{L^2(\mathbb B^+_2)}.
    \label{strong-1}
\end{align}

Next we will derive a boundary doubling estimate for $\bar u$.
We consider $\bar{u}$ be the solution to \eqref{bar-u}.  We may normalize $\bar{u}$ by rescaling as
\begin{equation}
   \|\bar{u}\|_{L^{2}(\mbb{B}_{1/2}^{+})} = 1.
\end{equation}
We claim there exists a positive constant $C > 0$ such that
\begin{equation}\label{claim 1}
  \|\bar{u}\|_{L^\infty(\mathbf B_{1/6})}+  \|\nabla' \bar{u}\|_{L^2(\mathbf B_{1/6})} \geq e^{-C(1+\|H\|^{\frac{2}{3}}_{L^{\infty}} + \|h\|^{2}_{L^{\infty}})}.
\end{equation}
To see this, we apply the Cauchy uniqueness result in \eqref{three ball}. The normalized inequality becomes
\begin{equation}
    \| {u}\|_{L^{2}(\mbb{B}^{+}_{1/16})} \leq e^{C(1+\|H\|^{\frac{2}{3}}_{L^{\infty}} + \|h\|^{2}_{L^{\infty}})}\|{u}\|_{L^{2}(\mbb{B}^{+}_{1/4})}^{\kappa} \|{u}\|_{H^{1}(\mathbf{B_{1/6}})}^{1-\kappa}.
    \label{three-1}
\end{equation}
It holds that $\nabla' u=e^{w(x)}\nabla'w \bar u+ e^{w(x)}\nabla' \bar u$. It follows from (\ref{regu-w-1}) and trace inequality (\ref{trace}) that
\begin{align}
   \|\nabla' {u}\|_{L^2(\mathbf{B_{1/6}})}&\leq e^{C(1+\|H\|^{\frac{2}{3}}_{L^{\infty}} + \|h\|^{2}_{L^{\infty}})} (\|\bar u\|_{L^\infty(\mathbf{B_{1/6}}) }\|\nabla' w\|_{L^2(\mathbf{B_{1/6}})}+ \|\nabla' \bar u\|_{L^2(\mathbf{B_{1/6}})}) \nonumber\\
   &\leq e^{C(1+\|H\|^{\frac{2}{3}}_{L^{\infty}} + \|h\|^{2}_{L^{\infty}})} (\|\bar u\|_{L^\infty(\mathbf{B_{1/6}})} \| w\|_{H^{\frac{3}{2}}(\mathbf{B_{1/6}})}+  \|\nabla' \bar u\|_{L^2(\mathbf{B_{1/6}})}) \nonumber\\
   &\leq e^{C(1+\|H\|^{\frac{2}{3}}_{L^{\infty}} + \|h\|^{2}_{L^{\infty}})} (\|\bar u\|_{L^\infty(\mathbf{B_{1/6}})}+  \|\nabla' \bar u\|_{L^2(\mathbf{B_{1/6}})}).
   \label{gra-1}
\end{align}
From the relation of $\bar u$ and $u$ in (\ref{rela-u}) and (\ref{half doubling}), we also get
\begin{align*}
    e^{-C(1+  \|h\|_{L^{\infty}})}  \|  {u}\|_{L^{2}(\mbb{B}^{+}_{R})}  \leq \| \bar {u}\|_{L^{2}(\mbb{B}^{+}_{R})}\leq e^{C(1+  \|h\|_{L^{\infty}})}  \|  {u}\|_{L^{2}(\mbb{B}^{+}_{R})}
\end{align*}
and 
\begin{equation}\label{half-d}
\| \bar u \|_{L^{\infty}(\mbb{B}^{+}_{2R}(x_{0}))} \leq e^{C(\|H\|^{\frac{2}{3}}_{L^{\infty}} + \|h\|^{2}_{L^{\infty}})}\|\bar u \|_{L^{\infty}(\mbb{B}^{+}_{R}(x_{0}))}
\end{equation}
for any $0<R<4$ under rescalings. Thus, (\ref{three-1}) and (\ref{gra-1}) imply that
\begin{equation*}
    \|\bar {u}\|_{L^{2}(\mbb{B}^{+}_{1/16})} \leq e^{C(1+\|H\|^{\frac{2}{3}}_{L^{\infty}} + \|h\|^{2}_{L^{\infty}})}\|\bar {u}\|_{L^{2}(\mbb{B}^{+}_{1/4})}^{\kappa} (\|\bar u\|_{L^\infty(\mathbf{B_{1/6}})}+  \|\nabla' \bar u\|_{L^2(\mathbf{B_{1/6}})})^{1-\kappa}.
\end{equation*}
Applying the doubling inequality (\ref{half-d}) in half balls finite many times, we have
\begin{align*}
    \|\bar {u}\|_{L^{2}(\mbb{B}^{+}_{1/4})}\leq e^{C(1+\|H\|^{\frac{2}{3}}_{L^{\infty}} + \|h\|^{2}_{L^{\infty}})} \|\bar {u}\|_{L^{2}(\mbb{B}^{+}_{1/16})}.
\end{align*}
Thus, the last two inequalities give that
\begin{align}
     \|\bar {u}\|_{L^{2}(\mbb{B}^{+}_{1/16})} \leq e^{C(1+\|H\|^{\frac{2}{3}}_{L^{\infty}} + \|h\|^{2}_{L^{\infty}})} (\|\bar u\|_{L^\infty(\mathbf{B_{1/6}})}+  \|\nabla' \bar u\|_{L^2(\mathbf{B_{1/6}})}).\label{yield-1}
\end{align}
We can apply the doubling inequality  (\ref{half-d}) finitely many times to have
\begin{equation}\label{yield-2}
    \begin{aligned}
        \|\bar{u}\|_{L^{2}(\mbb{B}^{+}_{1/16})} &\geq e^{-C(1+\|H\|^{\frac{2}{3}}_{L^{\infty}} +\|h\|^{2}_{L^{\infty}})}\|\bar{u}\|_{L^{2}(\mbb{B}^{+}_{1/2})}\\
        & \geq e^{-C(1+\|H\|^{\frac{2}{3}}_{L^{\infty}} +\|h\|^{2}_{L^{\infty}})}.
    \end{aligned}
\end{equation}
 Thus, the combination of (\ref{yield-1}) and (\ref{yield-2}) show that the claim (\ref{claim 1}) is true.
 
We now claim there exists $C>0$ such that
\begin{equation}\label{claim 2}
    \|\bar{u}\|_{L^{\infty}(\mathbf B_{1/5})} \geq e^{-C(1+\|H\|^{\frac{2}{3}}_{L^{\infty}} + \|h\|^{2}_{L^{\infty}})}.
\end{equation}
Recall the following interpolation estimate from \cite{R17}. For any $0<\varepsilon<1$, there holds
\begin{equation}\label{interp ineq}
    \|\nabla' w_1\|_{L^{2}(\mbb{R}^{n-1})} \leq \varepsilon \|w_1\|_{H^2(\mbb{R}^{n}_{+})}  + C\varepsilon^{-2} \|w_1\|_{L^{2}(\mbb{R}^{n-1})}.
\end{equation}
Note that $H^2$ regularity in (\ref{interp ineq}) seems to be necessary in the proof in \cite{R17}.
Take $w_1 = \eta \bar{u}$, where the smooth cut-off function $\eta$ is given as
\begin{equation*}
\eta(x) =
    \begin{cases}
        1 \text{ for } \quad x\in \mbb{B}^{+}_{1/6},\\
        0 \text{ for } \quad x\in (\mbb{B}^{+}_{1/5})^{c}.\\
    \end{cases}
\end{equation*}
From \eqref{interp ineq}, we  get
\begin{equation}\label{interp pde}
    \|\nabla' (\eta \bar{u}))\|_{L^{2}(\mathbf{B}_{1/5})} \leq \varepsilon \|\nabla^2 (\eta \bar{u})\|_{L^{2}(\mbb{B}^{+}_{1/5})}  + C\varepsilon^{-2} \|\eta \bar{u}\|_{L^{2}(\mathbf{B}_{1/5})}.
\end{equation}
Thus, we have
\begin{equation*}
 \|\nabla' \bar{u} \|_{L^{2}(\mathbf{B}_{1/6})} \leq C\varepsilon( \|\nabla^2 \bar u\|_{L^2(\mathbb B^+_{1/5})}
 +\|\nabla \bar u\|_{L^2(\mathbb B^+_{1/5})}+\|\bar u\|_{L^2(\mathbb B^+_{1/5})})+C\varepsilon^{-2} \|\bar{u}\|_{L^{2}(\mathbf{B}_{1/5})}.
\end{equation*}
Thanks to the estimates (\ref{strong-1}), we have
\begin{equation*}
\begin{aligned}
  \|\nabla \bar{u} \|_{L^{2}(\mbb{B}^{+}_{1/5})}+  \|\nabla^2 \bar{u} \|_{L^{2}(\mbb{B}^{+}_{1/5})} 
    & \leq  e^{C(1+\|H\|^{\frac{2}{3}}_{L^{\infty}} + \|h\|^{2}_{L^{\infty}})}.
    \end{aligned}
\end{equation*}
Then we obtain that
\begin{equation*}
    \|\nabla' \bar{u}\|_{L^{2}(\mathbf{B}_{1/6})} \leq \varepsilon e^{C(1+\|H\|^{\frac{2}{3}}_{L^{\infty}} +\|h\|^{2}_{L^{\infty}})} +C \varepsilon^{-2} \|\bar{u}\|_{L^{2}(\mathbf{B}_{1/5})}.
\end{equation*}
We now add $\|\bar{u}\|_{L^\infty(\mathbf{B}_{1/6})}$ to both sides to get
\begin{equation}
  \|\bar{u}\|_{L^\infty(\mathbf{B}_{1/6})}+  \| \nabla'\bar{u}\|_{L^2(\mathbf{B}_{1/6})} \leq \varepsilon e^{C(1+\|H\|^{\frac{2}{3}}_{L^{\infty}} +\|h\|^{2}_{L^{\infty}})} + C\varepsilon^{-2} \|\bar{u}\|_{L^\infty(\mathbf{B}_{1/5})}.
    \label{lazy-jack}
\end{equation}
We incorporate the first term on the right hand side by choosing $\varepsilon$ such that
$$
\varepsilon e^{C(1+\|H\|^{\frac{2}{3}}_{L^{\infty}} +\|h\|^{2}_{L^{\infty}})} = \frac{1}{2} (\|\bar{u}\|_{L^\infty(\mathbf{B}_{1/6})}+  \| \nabla'\bar{u}\|_{L^2(\mathbf{B}_{1/6})}).
$$
Namely,
\begin{equation*}
    \varepsilon = \bigg( \frac{  \|\bar{u}\|_{L^\infty(\mathbf{B}_{1/6})}+  \| \nabla'\bar{u}\|_{L^2(\mathbf{B}_{1/6})} }{2e^{C(1+\|H\|^{\frac{2}{3}}_{L^{\infty}} +\|h\|^{2}_{L^{\infty}})}}  \bigg).
\end{equation*}
Plugging this in (\ref{lazy-jack}) gives that
\begin{equation*}
   (\|\bar{u}\|_{L^\infty(\mathbf{B}_{1/6})}+  \| \nabla'\bar{u}\|_{L^2(\mathbf{B}_{1/6})})^{3} \leq e^{C(1+\|H\|^{\frac{2}{3}}_{L^{\infty}} +\|h\|^{2}_{L^{\infty}})}\|\bar{u}\|_{L^{\infty}(\mathbf{B}_{1/5})}.
\end{equation*}
Thanks to  \eqref{claim 1}, we have
\begin{equation}\label{claim 22}
    e^{-C(1+\|H\|^{\frac{2}{3}}_{L^{\infty}} + \|h\|^{2}_{L^{\infty}})} \leq \|\bar{u}\|_{L^{\infty}(\mathbf{B}_{1/5})},
\end{equation}
which verifies the claim \eqref{claim 2}.
By the elliptic estimates (\ref{324}), we get
\begin{equation}\label{claim 3}
    \begin{aligned}
        \|\bar{u}\|_{L^{\infty}(\mathbf{B}_{1/4})} & \leq \|\bar{u}\|_{L^{\infty}(\mathbb{B}^+_{1/4})} \\
        &\leq e^{C\|h\|_{L^{\infty}}} \|{u}\|_{L^{\infty}(\mathbb{B}^+_{1/4})} \\
        &  \leq C (\|H\|_\infty^{\frac{n}{2}}+\|h\|_\infty^{n}+1) e^{C\|h\|_\infty} \| u\|_{L^2(\mathbb B^+_{1/2})} \\
        &\leq e^{C(1+\|H\|^{\frac{2}{3}}_{L^{\infty}} + \|h\|^{2}_{L^{\infty}})}\|\bar{u}\|_{L^{2}(\mbb{B}^{+}_{1/2})} \\&=e^{C(1+\|H\|^{\frac{2}{3}}_{L^{\infty}} + \|h\|^{2}_{L^{\infty}})}.
    \end{aligned}
\end{equation}
Combining \eqref{claim 22} and \eqref{claim 3}, we have
\begin{equation}
    \|\bar{u}\|_{L^{\infty}(\mathbf{B}_{1/4})}  \leq e^{C(1+\|H\|^{\frac{2}{3}}_{L^{\infty}} + \|h\|^{2}_{L^{\infty}})}\|\bar{u}\|_{L^{\infty}(\mathbf{B}_{1/5})}.
\end{equation}
Notice that $u = e^{w(x)}\bar{u}$ on $\mathbf{B}_{1/2}$. By rescaling and diffeomorphism of the Fermi exponential map, and (\ref{infty-1}), we arrive at
\begin{equation*}
    \|u\|_{L^{\infty}(\mathbf{B}_{2r}(x_{0}))} \leq e^{C(1+\|H\|^{\frac{2}{3}}_{L^{\infty}} + \|h\|^{2}_{L^{\infty}})}\|u\|_{L^{\infty}(\mathbf{B}_{r}(x_{0}))}
\end{equation*}
for any $x_{0} \in \partial \mathcal{M}$, $\mbb{B}_{2r}(x_{0}) \subset \partial \mathcal{M}$, and $r < r_{0}$ for some $r_{0}$ depending only on $\partial \mathcal{M}$.  Therefore, we have proved Theorem \ref{thm 1}.

\end{proof}

\section{Proof of Carleman estimates}
This section is devoted to the proof of the local Carleman estimates in Proposition \ref{local car} and global Carleman estimates in Proposition \ref{prop GCE}.
\begin{proof}[Proof of Proposition \ref{local car}]
We write the Laplace-Beltrami operator as
\begin{equation*}
    r^2 \Delta_{g} = r^2 \partial^{2}_{r} + r^2 (\partial_{r}\ln\sqrt{\gamma} + \dfrac{n-1}{r})\partial_{r} + \dfrac{1}{\sqrt{\gamma}}\partial_{i}(\sqrt{\gamma}\gamma^{ij}\partial_{j}),
\end{equation*}
where $\partial_{i} = \dfrac{\partial}{\partial\theta_{i}}$, $\gamma_{ij}(r,\theta)$ is a metric on the geodesic sphere $S^{n-1}$, $\gamma^{ij}$ is the inverse of $\gamma_{ij}$, $\gamma=\det (\gamma_{ij})$, and $\theta_{n-1} = \dfrac{x_{n}}{|x|}$.  For $r$ small enough, we have
\begin{equation}\label{r metric}
     \begin{cases} 
      |\partial_{r}\gamma^{ij}| \leq C|\gamma^{ij}|, \\
      |\partial_{r}\gamma| \leq C,\\
      C^{-1} \leq \gamma \leq C,
   \end{cases}
\end{equation}
where $C$ depends on the manifold $\mathcal{M}$. Let $r = e^{t}$. Then $\partial_{r} = e^{-t}\partial_{t}$. Hence the function $u(t,\theta)$ is supported in the half cylinder $\mathcal{N} := S^{n-1}_{+} \times (-\infty,T_{0}]$. We also write $\partial \mathcal{N} = \partial S^{n-1}_{+} \times (-\infty,T_{0}]$. Notice that $T_{0} = ln(r_{0})$ is negative with $T_{0}$ large enough since $r_{0}$ is small. With the new coordinates, we can write
\begin{equation*}
    e^{2t}\Delta = \partial^{2}_{t} +(\partial_{t}\ln\sqrt{\gamma} + n-2)\partial_{t} + \dfrac{1}{\sqrt{\gamma}}\partial_{i}(\sqrt{\gamma}\gamma^{ij}\partial_{j}).
\end{equation*}
The conditions $\eqref{r metric}$ becomes
\begin{equation}\label{t metric}
     \begin{cases} 
      |\partial_{t}\gamma^{ij}| \leq Ce^{t}|\gamma^{ij}|, \\
      |\partial_{t}\gamma| \leq Ce^{t}, \\
      C^{-1} \leq \gamma \leq C.
   \end{cases}
\end{equation} 

 There is no effective way to deal with  the term $hv$ on the boundary for the Carleman estimates. Moreover, $v$ is not in $H^2$. In order to prove \eqref{car est}, we pursue a splitting strategy similar to \cite{GRSU20}, separating the problem into an elliptic equation with solutions in $H^1$ and an elliptic equation with solutions in $H^2$.  To this end, we set $v= v_{1} + v_{2}$ where $v_{1}$ is a solution to
\begin{equation}\label{u1 prob}
\begin{aligned}
        \bigg( \partial^{2}_{t} +(\partial_{t}\ln\sqrt{\gamma} + n-2)\partial_{t} + \dfrac{1}{\sqrt{\gamma}}\partial_{i}(\sqrt{\gamma}\gamma^{ij}\partial_{j}) -K^{2} \tau^{2} \bigg) v_{1}& = e^{2t}Hv+ e^{2t}F \quad in \ \mathcal{N}, \\
         \frac{\partial v_1}{\partial{\nu}} & = e^{t}hv \quad  on \ \partial \mathcal{N},
    \end{aligned}
\end{equation}
and $v_2$ is a solution to
\begin{equation}\label{u2 prob}
\begin{aligned}
        \bigg( \partial^{2}_{t} +(\partial_{t}\ln\sqrt{\gamma} + n-2)\partial_{t} + \dfrac{1}{\sqrt{\gamma}}\partial_{i}(\sqrt{\gamma}\gamma^{ij}\partial_{j})\bigg) v_{2}& =  -K^{2} \tau^{2}v_{1} \hspace{0.5cm}& in \hspace{0.1cm} \mathcal{N}, \\
         \frac{\partial v_2}{\partial{\nu}}& = 0 \hspace{0.5cm}& on \hspace{0.1cm} \partial \mathcal{N},
    \end{aligned}
\end{equation}
where $K >1$ is a large constant to be chosen later.
We now derive separate estimates for $v_{1}$ and $v_{2}$: For $v_{1}$, we use the definition of weak solutions. Moreover, we will make use of the term $K^{2} \tau^{2}v_1$ as the leading term in the power of $\tau$.  For $v_{2}$, we will apply  Carleman estimates. Note that the volume element in polar coordinates is $r^{n-1} \sqrt{\gamma}  dr d\theta$. By replacing $\tau$ by $\tau+\frac{n}{2}$, which only changes the lower bound of $\tau$ by a constant in the Carleman estimates, we can skip the $r^{n-1}$ in the volume element. We split the proof of the Carleman estimates in three steps.

\textit{Step 1:} \textit{Estimate for} $v_{1}$: Because of the term $-K^2\tau^2v_1$ in \eqref{u1 prob}, the solvability of $v_1$  follows from the Lax-Milgram theorem. By the regularity theory of elliptic equations,  the solution $v_1$ is at most in $ H^\frac{3}{2}$.  The weak solution of \eqref{u1 prob} is given as
\begin{equation}\label{u1 weak}
\begin{aligned}
    (\partial_{t}v_{1},\partial_{t} \xi) + (\gamma^{ij}\partial_{j}v_{1},\partial_{i}\xi) - ((\partial_{t}\ln\sqrt{\gamma} + n-2)\partial_{t}v_{1}, \xi) + K^{2}\tau^{2}(v_{1}, \xi)\\
    = -(e^{2t}(Hv+F),\xi) + (e^{t}hv,\xi)_{0} 
    \end{aligned}
\end{equation}
for all $\xi \in H^{1}(\mathcal{N})$, where $(\cdot,\cdot)$ is the $L^{2}$ inner product on $\mathcal{N}$ and $(\cdot,\cdot)_{0}$ is the restriction of the inner product onto $\partial \mathcal{N}$.
We now test \eqref{u1 weak} by taking $\xi =  e^{-2\tau \phi} v_{1}$, where $\phi$ is as defined in (\ref{phi-def}), to get
\begin{align}
    (\partial_{t}v_{1},\partial_{t} (e^{-2\tau \phi} v_{1})) + (\gamma^{ij}\partial_{j}v_{1},\partial_{i}( e^{-2\tau \phi} v_{1})) - ((\partial_{t}\ln\sqrt{\gamma} + n-2)\partial_{t}v_{1},  e^{-2\tau \phi} v_{1}) \nonumber \\ + K^{2}\tau^{2}(v_{1},  e^{-2\tau \phi} v_{1})
    = -(e^{2t}(Hv+F), e^{-2\tau \phi} v_{1}) + (e^{t}hu, e^{-2\tau \phi} v_{1})_{0}.
    \end{align}
We integrate by parts to get
\begin{equation}\label{u1 test}
    \begin{aligned}
        \|e^{-\tau \phi}\partial_{t}v_{1}\|^{2}_{L^{2}} &+\|e^{-\tau \phi}\nabla_{\theta} v_{1} \|^{2}_{L^{2}} + K^{2}\tau^{2}\| e^{-\tau \phi} v_{1}\|^{2}_{L^{2}}  \\
        &- 2\tau(e^{-\tau\phi}\phi'\partial_{t} v_{1},e^{-\tau \phi}v_{1})  \\
        &-(e^{-\tau \phi}(\partial_{t}\ln\sqrt{\gamma}
        + n-2)\partial_{t}v_{1}, e^{-\tau \phi}v_{1})\\
        &= -(e^{-\tau\phi}e^{2t}(Hv+F), e^{-\tau \phi} v_{1})+(e^{-\tau\phi}e^{t} hv, e^{-\tau\phi}v_{1})_{0} 
    \end{aligned}
\end{equation}
where $|\nabla_{\theta}v_{1}|^2 = \gamma^{ij}\partial_{i}v_{1} \partial_{j}v_{1}$.
We  control the inner products in (\ref{u1 test}) by Young's inequality. We estimate the first inner product in (\ref{u1 test}) to obtain  
\begin{equation}\label{inner 1}
    \begin{aligned}
&|2\tau(e^{-\tau\phi}\partial_{t} \phi'v_{1},e^{\tau \phi}v_{1} )| \\
&\leq \frac{1}{8} \|e^{-\tau\phi}\partial_{t}v_{1}\|^{2}_{L^{2}} + 16\tau^{2} \|e^{-\tau \phi} v_{1}\|^{2}_{L^{2}}\\
& \leq \frac{1}{2} \|e^{-\tau\phi}\partial_{t}v_{1}\|^{2}_{L^{2}} + 64\tau^{2} \|e^{-\tau \phi} v_{1}\|^{2}_{L^{2}},
    \end{aligned}
\end{equation}
since $\phi'\approx 1$. By the similar strategy, the second inner product in (\ref{u1 test})  can be controlled by
\begin{equation}\label{inner 2}
    \begin{aligned}
        &|e^{-\tau \phi}(\partial_{t}\ln\sqrt{\gamma} + n-2)\partial_{t}v_{1}, e^{-\tau \phi}v_{1})|  \\
        &\leq \tau^{-2}\|e^{-\tau \phi}(\partial_{t}\ln\sqrt{\gamma} + n-2)\partial_{t}v_{1}\|^{2}_{L^{2}} + \tau^{2}\|e^{-\tau \phi}v_{1}\|^{2}_{L^{2}}\\
&\leq n\tau^{-2}\|e^{-\tau \phi}\partial_{t}v_{1}\|^{2}_{L^{2}} + \tau^{2}\|e^{-\tau \phi}v_{1}\|^{2}_{L^{2}},\\
    \end{aligned}
\end{equation}
where the second inequality follows from \eqref{t metric} as $t$ is negatively large.
The third inner product in (\ref{u1 test}) can be bounded by
\begin{equation}\label{inner 3}
\begin{aligned}
|(e^{-\tau\phi}e^{2t}(Hv+F), e^{-\tau \phi} v_{1})| &\leq \tau^{-2}\|e^{-\tau\phi}e^{2t}(Hv+F)\|_{L^{2}}^{2} + \tau^{2}\|e^{-\tau \phi} v_{1}\|^{2}_{L^{2}}\\
&\leq  \tau^{-2}\|e^{-\tau\phi}e^{2t}(Hv+F)\|_{L^{2}}^{2} + \tau^{2}\|e^{-\tau \phi} v_{1}\|^{2}_{L^{2}}.
\end{aligned}
\end{equation}
There is no effective estimate for the integration on the boundary, due to the merely boundedness of $h$. By the Young's inequality, we get
\begin{equation}\label{3ast}
    |(e^{-\tau\phi} e^{t}hv, e^{-\tau\phi}v_{1})_{0}| \leq C(\varepsilon)\tau^{-1}\|e^{-\tau\phi} e^{t}hv\|^{2}_{0} + \varepsilon \tau \|e^{-\tau\phi}v_{1}\|_{0}^{2}
\end{equation}
where $\varepsilon > 0$ is small to be chosen later and $C(\varepsilon)\approx \frac{C}{\varepsilon}$.
We  further bound these boundary terms by the following trace estimates in \cite{GRSU20},
\begin{equation}\label{trace est Zhu}
\|v\|_{L^{2}(S^{n-1})}^{2} \leq C(\tau \|v\|_{L^{2}(S^{n}_{+})}^{2} + \tau^{-1} \|\nabla_{\theta} v\|_{L^{2}(S^{n}_{+})}^{2}).
\end{equation}
Applying (\ref{trace est Zhu}) to our second boundary term in \eqref{3ast} to have
\begin{equation}
\varepsilon \tau \|e^{-\tau\phi}v_{1}\|_{0}^{2} \leq C\varepsilon( \tau^{2} \|e^{-\tau\phi} e^t v_{1}\|_{L^{2}}^{2} +  \|e^{-\tau\phi}\nabla_{\theta}v_{1}\|_{L^{2}}^{2}).
\end{equation}
Thus, we get 
\begin{equation}\label{inner 4}
\begin{aligned}
|(e^{-\tau\phi} he^{t}v_{1}, e^{-\tau\phi}v_{1})_{0}| &\leq  C(\varepsilon)\tau^{-1}\|e^{-\tau\phi} hv\|^{2}_{0}
 \\
&+ C\varepsilon( \tau^{2} \|e^{-\tau\phi}v_{1}\|_{L^{2}}^{2} +  \|e^{-\tau\phi}\nabla_{\theta}v_{1}\|_{L^{2}}^{2}).
\end{aligned}
\end{equation}
For sufficiently small  $\varepsilon$ and large $K$, combining estimates (\ref{u1 test}), \eqref{inner 1}, \eqref{inner 2}, \eqref{inner 3}, \eqref{inner 4}, and grouping terms to the highest power of $\tau$ gives 
\begin{equation}\label{u1 carleman}
    \begin{aligned}
       & \|e^{-\tau \phi}\partial_{t}v_{1}\|^{2}_{L^{2}} +\|e^{-\tau \phi}\nabla_{\theta} v_{1} \|^{2}_{L^{2}}  + \frac{K^{2}}{2}\tau^{2}\|e^{-\tau \phi} v_{1}\|^{2}_{L^{2}}\\
          &\leq C\tau^{-2}\|e^{-\tau\phi}He^{2t}v\|^{2}_{L^{2}} +C\tau^{-2}\|e^{\tau\phi}e^{2t} F\|^{2}_{L^{2}}+C\tau^{-1}\|e^{-\tau\phi} e^{t} hv\|^{2}_{0},
    \end{aligned}
\end{equation}
where  the last two terms in the right hand side of (\ref{inner 4}) are absorbed into the left hand of (\ref{u1 carleman}).

Next we prove  stronger Carleman estimates with some strong assumption on $v_1$.
Suppose that $\supp v_1\subset \{ x\in \mathbb B^+_{r_0}| r(x)\geq \rho\}$. Let $\hat{T}_0=\ln \rho$. Applying Cauchy-Schwarz inequality, we have
\begin{align}
|\int_{\mathcal{N}}  \partial_t |v_1|^2 e^{-t} e^{-2\tau\phi}\sqrt{\gamma} \, dtd\theta|\leq 2 (\int_{\mathcal{N}}  |\partial_t v_1|^2 e^{-t} e^{-2\tau\phi}\sqrt{\gamma} \, dtd\theta)^{\frac{1}{2}}
(\int_{\mathcal{N}} | v_1|^2 e^{-t} e^{-2\tau\phi}\sqrt{\gamma} \, dtd\theta)^{\frac{1}{2}}.
\label{cauchy-1}
\end{align}
For the left hand side of (\ref{cauchy-1}), integrating by parts shows that
\begin{align}
\int_{\mathcal{N}}  \partial_t |v_1|^2 e^{-t} e^{-2\tau\phi}\sqrt{\gamma} \, dtd\theta&=\int_{\mathcal{N}}  |v_1|^2 e^{-t} e^{-2\tau\phi}\sqrt{\gamma} \, dtd\theta-\int_{\mathcal{N}}  |v_1|^2 e^{-t} e^{-2\tau \phi}\partial_t (\ln\sqrt{\gamma})\sqrt{\gamma} \, dtd\theta \nonumber\\
&+2\tau\int_{\mathcal{N}}  |v_1|^2 e^{-t} e^{-2\tau\phi}\phi'\sqrt{\gamma} \, dtd\theta.
\end{align}
Since $|\partial_t\ln\sqrt{\gamma}|\leq C e^t$ for $|\hat{T}_0|$ large enough and $\phi'\approx 1$, we obtain that
\begin{align}
|\int_{\mathcal{N}}  \partial_t |v_1|^2 e^{-t} e^{-2\tau \phi}\sqrt{\gamma} \, dtd\theta|\geq C \tau\int_{\mathcal{N}}   |v_1|^2 e^{-t} e^{-2\tau \phi}\sqrt{\gamma} \, dtd\theta.
\label{cauchy1-1}
\end{align}
Taking (\ref{cauchy-1}) and (\ref{cauchy1-1}) into consideration gives that
\begin{align}
e^{-\hat{T}_0}\int_{\mathcal{N}}  | \partial_t v_1|^2 e^{-2\tau \phi} \sqrt{\gamma} \, dtd\theta &\geq \int_{\mathcal{N}}  | \partial_t v_1|^2 e^{-t} e^{-2\tau \phi}\sqrt{\gamma} \, dtd\theta \nonumber \\
&\geq C\tau^2\int_{\mathcal{N}}  |v_1|^2 e^{-t} e^{-2\tau \phi}\sqrt{\gamma} \, dtd\theta.
\end{align}
Notice that $e^{-\hat{T}_0}=\rho^{-1}$. Thanks to (\ref{u1 carleman}), we derive that
\begin{equation}
    \begin{aligned}
       & \|e^{-\tau \phi}\partial_{t}v_{1}\|^{2}_{L^{2}} +\|e^{-\tau \phi}\nabla_{\theta} v_{1} \|^{2}_{L^{2}}  + \frac{K^{2}}{2}\tau^{2}\|e^{\tau \phi} v_{1}\|^{2}_{L^{2}}+\tau^2 \rho  \|e^{-\tau \phi} e^{-t/2}v_{1}\|^{2}_{L^{2}}  \\
          &\leq C\tau^{-2}\|e^{-\tau\phi}He^{2t}v\|^{2}_{L^{2}} +C\tau^{-2}\|e^{-\tau\phi}e^{2t} F\|^{2}_{L^{2}}+C\tau^{-1}\|e^{-\tau\phi} e^{t} hv\|^{2}_{0}.
    \end{aligned}
\end{equation}

\textit{Step 2:} \textit{Estimate for} $v_{2}$. We now derive the estimate for $v_{2}$.\\
Let
$$
v_{2} = e^{-\tau \psi} W.
$$
Recall that $\psi(x) = -\phi(\ln(r(x)))$. We introduce the conjugate operator,
\begin{equation*}
\begin{aligned}
\mathcal{L}(W) = r^{2}e^{\tau\psi(x)}\Delta_{g}(e^{-\tau\psi(x)}W) \\
= e^{2t}e^{-\tau\phi(t)}\Delta_{g}(e^{\tau\phi(t)}W). 
\end{aligned}
\end{equation*}
By straightforward calculations, it follows that
\begin{equation*}
    \begin{aligned}
    \mathcal{L}(W)= \partial^{2}_{t}W + (2\tau\phi' + (n-2)+\partial_{t}\ln \sqrt{\gamma})\partial_{t}W \\
    +(\tau^{2}|\phi'|^{2}+ \tau\phi''+ (n-2)\tau\phi'+\tau\partial_{t}\ln \sqrt{\gamma}\phi')W + \Delta_{\theta}W,
    \end{aligned}
\end{equation*}
where
\begin{equation}\label{Lap-Bel}
    \Delta_{\theta}W = \dfrac{1}{\sqrt{\gamma}}\partial_{i}(\sqrt{\gamma}\gamma^{ij}\partial_{j}W)
\end{equation}
is the Laplace-Beltrami operator on $S^{n-1}$.  We introduce the following $L^{2}$ norm
$$
\|W\|_{\phi}^{2} = \int\limits_{\mathcal{N}} |W|^{2} \phi'^{-2} \sqrt{\gamma}dtd\theta,
$$
where $d\theta$ is the measure on $S^{n-1}$ and  $\sqrt{\gamma}$ is the metric on $S^{n-1}$. In the following calculations with the norm $\|W\|_{\phi}$, we will skip the term $\sqrt{\gamma}$ since it will not do any harm because of (\ref{t metric}). By the properties of $\phi$, it is easy to see that this new norm is equivalent to the usual $L^{2}$ norm. To obtain the Carleman estimate, we aim to find a lower bound for $\|\mathcal{L}(W)\|_{\phi}$. By the triangle inequality, we have
$$
\|\mathcal{L}(W)\|_{\phi} \geq\mathcal{B}- \mathcal{A},
$$
where
\begin{equation}\label{A}
\mathcal{B} = \|\partial^{2}_{t}W + 2\tau\phi'\partial_{t}W + \tau^{2}|\phi'|^{2}W  + \Delta_{\theta}W\|_{\phi}
\end{equation}
and
\begin{equation}\label{B}
    \mathcal{A} = \|\tau\phi''W +(\partial_{t}\ln \sqrt{\gamma} + n-2)\tau\phi'W   + (\partial_{t}\ln \sqrt{\gamma} + n-2)\partial_{t}W \|_{\phi}.
\end{equation}
We will show later that $\mathcal{A}$ can be controlled by $\mathcal{B}$. 
Write
\begin{equation}\label{Asq est}
    \mathcal{B}^{2} = \mathcal{B}_{1} + \mathcal{B}_{2} + \mathcal{B}_{3}
\end{equation}
where
\begin{equation}
    \mathcal{B}_{1} =\|\partial^{2}_{t}W  + \tau^{2}|\phi'|^{2}W  + \Delta_{\theta}W\|_{\phi}^{2},
\end{equation}
\begin{equation}\label{A2 est}
    \mathcal{B}_{2} = \|2\tau\phi'\partial_{t}W\|_{\phi}^{2},
\end{equation}
\begin{equation}
    \mathcal{B}_{3} = 2 \langle 2\tau\phi'\partial_{t}W,\partial^{2}_{t}W  + \tau^{2}|\phi'|^{2}W  + \Delta_{\theta}W \rangle_{\phi}.
\end{equation}
We decompose $\mathcal{B}_{3}$ as follows
$$
\mathcal{B}_{3} = J_{1} + J_{2} + J_{3},
$$
where
$$
J_{1} = 4\tau \int\limits_{\mathcal{N}} \phi'^{-1}\partial_{t}W\partial^{2}_{t}W dtd\theta,
$$
$$
J_{2} = 4\tau^{3} \int\limits_{\mathcal{N}} \phi'\partial_{t}W W dtd\theta,
$$
$$
J_{3} = 4\tau \int\limits_{\mathcal{N}} \phi'^{-1}\partial_{t}W\Delta_{\theta}W dtd\theta.
$$
Using $\partial_{t}|\partial_{t}W|^{2} = 2\partial_{t}W\partial_{t}^{2}W$,  for $J_1$, we integrate  by parts with respect to $t$ to get
\begin{equation}\label{K1 est}
    J_{1} = 2 \tau \int\limits_{\mathcal{N}} \phi''\phi'^{-2}|\partial_{t}W|^{2} dtd\theta = -2\tau\||\phi''|^{\frac{1}{2}}\partial_{t}W\|_{\phi}^{2},
\end{equation}
where the last equality follows from the fact that $\phi'' = -|\phi''|$ by the definition of $\phi$ in (\ref{phi-def}).
Similarly, performing integration by parts on $J_{2}$ with respect to $t$ gives that
\begin{equation}\label{K2 est}
\begin{aligned}
        J_{2} &= -2\tau^{3} \int\limits_{\mathcal{N}} \phi''|W|^{2} dtd\theta \\
        &\geq \tau^{3} \||\phi''|^{\frac{1}{2}}W\|_{\phi}^{2},
\end{aligned}
\end{equation}
since $\phi' \approx 1$.

For $J_3$, Integrating by parts  with respect to $\partial_{i}$ show that
\begin{equation*}
\begin{aligned}
        J_{3} = -4\tau \int\limits_{\mathcal{N}} \phi'^{-1}\partial_{t}\partial_{i}W\gamma^{ij}\partial_{j}W dtd\theta + 4\tau \int\limits_{\partial \mathcal{N}} \phi'^{-1}\partial_{t}W\gamma^{j(n-1)}\partial_{j}W dtd\theta. 
\end{aligned}
\end{equation*}
This second integral is $0$ since we have that $\gamma^{j(n-1)}\partial_{j}W = e^{\tau \phi}\frac{\partial v_2}{\partial_{\nu}} = 0$ on $\partial\mathcal{N}$ by the boundary term in \eqref{u2 prob}. 
We integrate by parts with respect to $t$ and use the assumptions for $\gamma^{ij}$ to get
\begin{equation}\label{K3 est}
\begin{aligned}
        J_{3} &= -4\tau \int\limits_{\mathcal{N}} \phi''\phi'^{-2}\partial_{i}W\gamma^{ij}\partial_{j}W dtd\theta + 4\tau \int\limits_{\mathcal{N}} \phi'^{-1}\partial_{i}W\partial_{t}\gamma^{ij}\partial_{j}W dtd\theta \\
        &\geq 3 \tau \int\limits_{\mathcal{N}} |\phi''||\nabla_{\theta}W|^{2}\phi'^{-2} dtd\theta = 3\tau \||\phi''|^{\frac{1}{2}}\nabla_{\theta}W
\|^{2}_{\phi},
\end{aligned}
\end{equation}
where $|\nabla_{\theta} W|^{2} = \gamma^{ij}\partial_{i}W\partial_{j}W$.
Combining the estimates \eqref{K1 est} for $J_{1}$, \eqref{K2 est} for $J_{2}$, and \eqref{K3 est} for $J_{3}$ yields
\begin{equation}\label{A3 est}
    \mathcal{B}_{3} \geq 3\tau \||\phi''|^{\frac{1}{2}}\nabla_{\theta}W\|_{\phi}^{2} + \tau^{3} \||\phi''|^{\frac{1}{2}}W\|_{\phi}^{2} - 2\tau\||\phi''|^{\frac{1}{2}}\partial_{t}W\|_{\phi}^{2}.
\end{equation}
To bound $\mathcal{B}_{1}$ we need to use a stronger norm.  Let $\delta$ to be a small constant which to be determined later.  Since $|\phi''|\leq 1$ and $\tau \geq 1$, we have
\begin{equation}\label{A delt}
    \mathcal{B}_{1} \geq \frac{\delta}{\tau} \Tilde{\mathcal{B}}_{1},
\end{equation}
where $\Tilde{\mathcal{B}}_{1}$ is given by
\begin{equation*}
    \Tilde{\mathcal{B}}_{1} = \||\phi''|^{\frac{1}{2}}(\partial^{2}_{t}W  + \tau^{2}|\phi'|^{2}W  + \Delta_{\theta}W)\|^{2}_{\phi}.
\end{equation*}
We decompose $\Tilde{\mathcal{B}}_{1}$ as
\begin{equation}\label{A tild}
    \Tilde{\mathcal{B}}_{1} = K_{1} + K_{2} + K_{3},
\end{equation}
where
\begin{equation*}
    K_{1} = \||\phi''|^{\frac{1}{2}}(\partial^{2}_{t}W  + \Delta_{\theta}W)\|^{2}_{\phi},
\end{equation*}
\begin{equation*}
    K_{2} = \||\phi''|^{\frac{1}{2}}\tau^{2}|\phi'|^{2}W\|^{2}_{\phi},
\end{equation*}
and
\begin{equation*}
    K_{3} = 2\langle|\phi''|(\partial^{2}_{t}W+ \Delta_{\theta}W),\tau^{2}|\phi'|^{2}W\rangle_{\phi}.
\end{equation*}
We further split $K_{3}$ into $K_{3} = G_{1} + G_{2}$.
Using the fact that $|\phi''| = -\phi''$, we integrate by parts with respect to $t$ to have
\begin{equation*}
\begin{aligned}
    G_{1} &=2\tau^{2}\int\limits_{\mathcal{N}}|\phi''|\partial_{t}^{2}WW dt d\theta \\
    &= 2\tau^{2}\int\limits_{\mathcal{N}}\phi''(\partial_{t}W)^{2} dt d\theta +  2\tau^{2}\int\limits_{\mathcal{N}}\phi'''\partial_{t}WW dt d\theta.
\end{aligned}
\end{equation*}
By Cauchy-Schwarz inequality and properties of $\phi$, we have
\begin{equation}\label{H1}
\begin{aligned}
    G_{1} &\geq -2\tau^{2}\int\limits_{\mathcal{N}}|\phi''|(\partial_{t}W)^{2} dt d\theta -  2\tau^{2}\int\limits_{\mathcal{N}}|\phi'''|((\partial_{t}W)^{2}+W^{2})dt d\theta\\
    &\geq -C\tau^{2}\||\phi''|^{\frac{1}{2}}\partial_{t}W\|_{\phi}^{2} -C\tau^{2}\||\phi''|^{\frac{1}{2}}W\|_{\phi}^{2}.
    \end{aligned}
\end{equation}
For $G_2$, we integrate  by parts with respect to $\partial_{i}$ to get
\begin{equation}\label{H2}
    \begin{aligned}
        G_{2} &=2\tau^{2}\int\limits_{\mathcal{N}}|\phi''|\Delta_{\theta}WW dt d\theta \\
        &= -2\tau^{2}\int\limits_{\mathcal{N}}|\phi''|\gamma^{ij}\partial_{j}W\partial_{i}W dt d\theta + 2\tau^{2}\int\limits_{\partial\mathcal{N}}|\phi''|\gamma^{j(n-1)}\partial_{j}WW dt d\theta \\
        &\geq -2\tau^{2}\||\phi''|^{\frac{1}{2}}\nabla_{\theta}W\|^{2}_{\phi},
    \end{aligned}
\end{equation}
where we used the fact that second integral is $0$ by \eqref{u2 prob}. 
Note that $K_{1} \geq 0$ and $$J_{2} \geq C\tau^{4}\||\phi''|^{\frac{1}{2}}W\|^{2}_{\phi}.$$  It follows from \eqref{A delt}, \eqref{A tild}, \eqref{H1}, and \eqref{H2} that
\begin{equation}\label{A1 est}
    \mathcal{B}_{1} \geq C\tau^{3}\delta\||\phi''|^{\frac{1}{2}}W\|^{2}_{\phi}-C\tau\delta (\||\phi''|^{\frac{1}{2}}\partial_{t}W\|^{2}_{\phi} + \||\phi''|^{\frac{1}{2}}\nabla_{\theta}W\|^{2}_{\phi}).
\end{equation}
Combining \eqref{Asq est}, \eqref{A2 est}, \eqref{A3 est}, and \eqref{A1 est}, we get
\begin{equation*}
    \begin{aligned}
        \mathcal{B}^{2}&\geq C\tau^{3}\delta\||\phi''|^{\frac{1}{2}}W\|^{2}_{\phi} +3\tau \||\phi''|^{\frac{1}{2}}\nabla_{\theta}W\|_{\phi}^{2} + \tau^{3} \||\phi''|^{\frac{1}{2}}W\|_{\phi}^{2} +4\tau^{2}\|\partial_{t}W\|_{\phi}^{2}\\
        &-C\tau\delta \||\phi''|^{\frac{1}{2}}\partial_{t}W\|^{2}_{\phi} -C\tau\delta \||\phi''|^{\frac{1}{2}}\nabla_{\theta}W\|^{2}_{\phi} - 2\tau\||\phi''|^{\frac{1}{2}}\partial_{t}W\|_{\phi}^{2}.
    \end{aligned}
\end{equation*}
Since $\delta$ can be chosen to be small and $|\phi''|$ is small for large $|T_{0}|$, it follows that
\begin{equation}
    \mathcal{B}^{2}\geq C\tau^{3}\||\phi''|^{\frac{1}{2}}W\|^{2}_{\phi} +C\tau \||\phi''|^{\frac{1}{2}}\nabla_{\theta}W\|_{\phi}^{2} +\hat{C}\tau^2\|\partial_{t}W\|_{\phi}^{2},
\end{equation} 
where $\hat{C}$ can be chosen arbitrarily smaller than $C$. If we take $|T_{0}|$ and $\tau$ to be large enough, it is clear that $\mathcal{A}$ is absorbed into $\mathcal{B}$, since the term $(n-2)\tau \phi' W$ in $\mathcal{A}$ can be incorporated initially by the term $\tau^{2}|\phi'|^{2}W$ in $\mathcal{B}$.
Thus,
\begin{equation}
    \|\mathcal{L}(W)\|_{\phi}^{2} \geq C\tau^{3}\||\phi''|^{\frac{1}{2}}W\|^{2}_{\phi} +C\tau \||\phi''|^{\frac{1}{2}}\nabla_{\theta}W\|_{\phi}^{2} +\hat{C}\tau^2\|\partial_{t}W\|_{\phi}^{2}.
    \label{lower}
\end{equation}

Next we prove  stronger Carleman estimates with some strong assumption on $W$.
Suppose that $\supp W\subset \{ y\in \mathbb B^+_{r_0}| r(y)\geq \rho\}$. Let $\hat{T}_0=\ln \rho$. The application of Cauchy-Schwarz inequality gives that
\begin{align}
\int_{\mathcal{N}} \partial_t |W|^2 e^{-t} \sqrt{\gamma} \, dtd\theta\leq 2 (\int_{\mathcal{N}} |\partial_t W|^2 e^{-t} \sqrt{\gamma} \, dtd\theta)^{\frac{1}{2}}
(\int_{\mathcal{N}} | W|^2 e^{-t} \sqrt{\gamma} \, dtd\theta)^{\frac{1}{2}}.
\label{cauchy}
\end{align}
Applying the integration by parts shows that
\begin{align}
\int_{\mathcal{N}} \partial_t |W|^2 e^{-t} \sqrt{\gamma} \, dtd\theta=\int_{\mathcal{N}} |W|^2 e^{-t} \sqrt{\gamma} \, dtd\theta-\int_{\mathcal{N}} |W|^2 e^{-t} \partial_t (\ln\sqrt{\gamma})\sqrt{\gamma} \, dtd\theta.
\end{align}
Thanks to (\ref{t metric}), we have that
\begin{align}
\int_{\mathcal{N}} \partial_t |W|^2 e^{-t} \sqrt{\gamma} \, dtd\theta\geq C \int_{\mathcal{N}}  |W|^2 e^{-t} \sqrt{\gamma} \, dtd\theta.
\label{cauchy1}
\end{align}
Taking (\ref{cauchy}) and (\ref{cauchy1}) into consideration gives that
\begin{align}
e^{-\hat{T}_0}\int_{\mathcal{N}} | \partial_t W|^2  \sqrt{\gamma} \, dtd\theta &\geq \int_{\mathcal{N}} | \partial_t W|^2 e^{-t} \sqrt{\gamma} \, dtd\theta \nonumber \\
&\geq C\int_{\mathcal{N}} |W|^2 e^{-t} \sqrt{\gamma} \, dtd\theta.
\end{align}
Since $e^{-\hat{T}_0}=\rho^{-1}$, from (\ref{lower}),  we arrive at
\begin{equation*}
    \|\mathcal{L}(W)\|_{\phi}^{2} \geq C\tau^{3}\||\phi''|^{\frac{1}{2}}W\|^{2}_{\phi} +C\tau \||\phi''|^{\frac{1}{2}}\nabla_{\theta}W\|_{\phi}^{2} +\hat{C}\tau^2\|\partial_{t}W\|_{\phi}^{2}+C\tau^2\rho\|W e^{-t/2}\|^2_{\phi}.
\end{equation*}
Recall the conjugate operator $v_{2} = e^{\tau\phi}W$. Then we see from the definition of $\mathcal{L}(W)$ that
\begin{equation*}
\begin{aligned}
       \|e^{2t}e^{-\tau\phi}(\Delta v_{2})\|_{\phi}^{2} &\geq C\tau^{3}\||\phi''|^{\frac{1}{2}}e^{-\tau\phi}v_{2}\|^{2}_{\phi} +C\tau \||\phi''|^{\frac{1}{2}}e^{-\tau\phi}\nabla_{\theta}v_{2}\|_{\phi}^{2}\\
    &+\hat{C}\tau\||\phi''|^{\frac{1}{2}}e^{-\tau\phi}\partial_{t}v_{2}\|_{\phi}^{2} - \hat{C}\tau^{3}\||\phi''|^{\frac{1}{2}}e^{-\tau\phi}v_{2}\|_{\phi}^{2}\\
    &+C\tau^2\rho \|e^{-\tau\phi}e^{-t/2}v_{2}\|^{2}_{\phi}.
    \end{aligned} 
\end{equation*}
Since $\hat{C}$ is chosen smaller than $C$, we get
\begin{equation*}
\begin{aligned}
       \|e^{2t}e^{-\tau\phi}(\Delta v_{2})\|_{\phi}^{2} &\geq C\tau^{3}\||\phi''|^{\frac{1}{2}}e^{-\tau\phi}v_{2}\|^{2}_{\phi} +C\tau \||\phi''|^{\frac{1}{2}}e^{-\tau\phi}\nabla_{\theta}v_{2}\|_{\phi}^{2}\\
    &+\hat{C}\tau\||\phi''|^{\frac{1}{2}}e^{-\tau\phi}\partial_{t}v_{2}\|_{\phi}^{2}+C\tau^2\rho \|e^{-\tau\phi}e^{-t/2}v_{2}\|^{2}_{\phi}.
    \end{aligned} 
\end{equation*}
By equivalence of $\| \cdot \|_{\phi}$ and $\|\cdot\|_{L^{2}}$, we arrive at
\begin{equation}\label{u2 carleman}
\begin{aligned}
       \|e^{2t}e^{-\tau\phi}(\Delta v_{2})\|_{L^{2}}^{2} &\geq C\tau^{3}\||\phi''|^{\frac{1}{2}}e^{-\tau\phi}v_{2}\|_{L^{2}}^{2} +C\tau \||\phi''|^{\frac{1}{2}}e^{-\tau\phi}\nabla_{\theta}v_{2}\|_{L^{2}}^{2} \\
    &+\hat{C}\tau\||\phi''|^{\frac{1}{2}}e^{-\tau\phi}\partial_{t}v_{2}\|_{L^{2}}^{2}+C\tau^2\rho \|e^{-\tau\phi}e^{-t/2}v_{2}\|^{2}_{L^{2}}.
    \end{aligned} 
\end{equation}

\textit{Step 3:} \textit{Combining $v_{1}$ and $v_{2}$}: Using \eqref{u1 carleman}, \eqref{u2 carleman}, and the triangle inequality,  we see that
\begin{equation}
    \begin{aligned} \label{weak-jack}
       & \tau\||\phi''|^{\frac{1}{2}}e^{-\tau\phi}\partial_{t}v\|^{2}_{L^{2}}+\tau\||\phi''|^{\frac{1}{2}}e^{-\tau\phi}\nabla_{\theta} v \|^{2}_{L^{2}} + \tau^3\||\phi''|^{\frac{1}{2}} e^{-\tau\phi} v\|^{2}_{L^{2}}\\ &+\tau^2 \rho \|e^{-\tau\phi}e^{-t/2}v\|^{2}_{L^{2}}\\
       & \leq \tau \||\phi''|^{\frac{1}{2}}e^{-\tau\phi}\partial_{t}v_{1}\|^{2}_{L^{2}} +\tau \||\phi''|^{\frac{1}{2}}e^{-\tau\phi}\nabla_{\theta} v_{1} \|^{2}_{L^{2}} + \tau^3\||\phi''|^{\frac{1}{2}} e^{-\tau\phi} v_{1}\|^{2}_{L^{2}}\\&+\tau^2 \rho \|e^{-\tau\phi}e^{-t/2}v_{1}\|^{2}_{L^{2}}\\
        &+ \tau \||\phi''|^{\frac{1}{2}}e^{\tau \psi}\partial_{t}v_{2}\|^{2}_{L^{2}} +\tau \||\phi''|^{\frac{1}{2}}e^{-\tau\phi}\nabla_{\theta} v_{2} \|^{2}_{L^{2}} + \tau^3\||\phi''|^{\frac{1}{2}} e^{-\tau\phi} v_{2}\|^{2}_{L^{2}}\\&+\tau^2 \rho \|e^{-\tau\phi}e^{-t/2}v_{2}\|^{2}_{L^{2}}\\
        &\leq C \tau^{-1}\|e^{-\tau\phi}He^{2t}v\|^{2}_{L^{2}} +C \tau^{-1}\|e^{-\tau\phi}e^{2t} F\|^{2}_{L^{2}}+  C \|e^{2t}e^{-\tau\phi}(\Delta v_{2})\|^2_{L^{2}} \\&+ C \|e^t e^{-\tau\phi} hv\|^{2}_{0}\\
       & \leq C\tau^{-1}\|e^{-\tau\phi}He^{2t}v\|^{2}_{L^{2}} +C\tau^{-1}\|e^{-\tau\phi}e^{2t} F\|^{2}_{L^{2}}+ K^{4}\tau^{4}\|e^{-\tau\phi}v_{1}\|^2_{L^{2}} \\&+C \|e^te^{-\tau\phi} hv\|^{2}_{0}\\
        &\leq C\|e^{-\tau\phi}He^{2t}v\|^{2}_{L^{2}}+C\|e^{-\tau\phi}e^{2t} F\|^{2}_{L^{2}}+C\tau\|e^te^{-\tau\phi} hv\|^{2}_{0},
    \end{aligned}
\end{equation}
where we have used the definition that $e^{2t}\Delta v_{2} = -K^2 \tau^{2} v_{1}$. To control the boundary term, we apply the trace inequality \eqref{trace est Zhu} to have
\begin{equation}
\begin{aligned}\label{english-2}
    \tau \|e^t e^{-\tau\phi} hv\|^{2}_{0}& \leq\tau \|h\|_{L^{\infty}}^{2} \|e^te^{-\tau\phi} v\|_{0}^{2} \\
    &\leq C(\tau^2 \|h\|_{L^{\infty}}^{2} \||\phi''|^{\frac{1}{2}}e^{-\tau\phi} v\|^{2}_{L^{2}} +\|h\|_{L^{\infty}}^2\||\phi''|^{\frac{1}{2}}e^{-\tau\phi} \nabla_{\theta} v\|^{2}_{L^{2}}),
\end{aligned} 
\end{equation}
where we used the fact that $e^t\leq |\phi''|^{\frac{1}{2}}$ as $t$ is negatively large enough.
Letting $\tau\geq C(1+\|h\|_{L^{\infty}}^{2} ) $, we can absorb the terms in (\ref{english-2}) to the left hand side in (\ref{weak-jack}) to get 
\begin{equation}\label{car H}
     \begin{aligned}
        \tau\||\phi''|^{\frac{1}{2}}e^{-\tau\phi}\partial_{t}v\|^{2}_{L^{2}} &+\tau\||\phi''|^{\frac{1}{2}}e^{-\tau\phi}\nabla_{\theta} v \|^{2}_{L^{2}} + \tau^3\||\phi''|^{\frac{1}{2}} e^{-\tau\phi} v\|^{2}_{L^{2}}+\tau^2 \rho \|e^{-\tau\phi}e^{-t/2}v\|^{2} \\
&\leq C \|e^{-\tau\phi}He^{2t}v\|^{2}_{L^{2}} +C\|e^{-\tau\phi}e^{2t} F\|^{2}_{L^{2}}.
\end{aligned}
\end{equation}
By choosing
\begin{align}
    \tau \geq (1 + \|H\|^{\frac{2}{3}}_{L^{\infty}}+\|h\|_{L^{\infty}}^{2} ),
\end{align}
we have
\begin{equation}
     \begin{aligned}
        \tau\||\phi''|^{\frac{1}{2}}e^{-\tau\phi}\partial_{t}v\|^{2}_{L^{2}} &+\tau\||\phi''|^{\frac{1}{2}}e^{-\tau\phi}\nabla_{\theta} v \|^{2}_{L^{2}} + \tau^3\||\phi''|^{\frac{1}{2}} e^{-\tau\phi} v\|^{2}_{L^{2}}+\tau^2 \rho \|e^{-\tau\phi}e^{-t/2}v\|^{2} \\
&\leq C\|e^{-\tau\phi}e^{2t} F\|^{2}_{L^{2}}.
\end{aligned}
\end{equation}
 By  converting to $x$ variables, we arrive at the desired Carleman estimates.
\end{proof}

Next we briefly present the proof for the global Carleman estimates in Proposition \ref{prop GCE}.
We rely on a global Carleman estimates for strong solutions in $H^2$. Since $v$ is $H^\frac{3}{2}$ in (\ref{des-1}), we show the argument by approximation.

\begin{proof}[Proof of Proposition \ref{prop GCE}] 
We consider $v$ in (\ref{des-1}) with $\supp v\subset \mathbb{B}_{2}^{+}$.
For $F\in L^2(\mathbb{B}_{2}^{+})$, and $g\in L^2(\partial\mathbb{B}_{2}^{+}\cap \{x  |  x_{n} = 0\}),$ by the regularity result in Lemma \ref{lem-re}, it holds that
\begin{align*}
   \|v\|_{H^\frac{3}{2}(\mathbb{B}_{2}^{+})} \leq  C (\|F\|_{L^2( \mathbb{B}_{2}^{+})}+ \|g\|_{L^2(\partial\mathbb{B}^+_2\cap\{x|x_n=0\})}).
\end{align*}

We consider a sequence of $F_k\in C_0^\infty (\mathbb{B}^+_2)$ such that $F_k\to F$ in $L^2(\mathbb{B}^+_2)$ and  a sequence of $g_k\in C_0^\infty (\partial\mathbb{B}^+_2\cap\{x|x_n=0\})$   such that $g_k\to g$ in $L^2(\partial\mathbb{B}^+_2\cap\{x|x_n=0\})$. Hence there exists $v_k$ such that 
 \begin{equation*}
    \begin{cases}
        \Delta_{g} v_k = F_k&  \hspace{0.5cm} in \hspace{0.1cm} \mathbb{B}_{2}^{+}, \\
         \frac{\partial v_k} {\partial {\nu}} = g_k& \hspace{0.5cm} on \hspace{0.1cm} \partial\mathbb{B}_{2}^{+}\cap \{x  |  x_{n} = 0\}.  
    \end{cases}
\end{equation*}
It holds that $v_k\in C_0^\infty(\mathbb{B}_{2}^{+})$. By the Carleman estimates in \cite{Z23} (see Proposition 3) or \cite{LR95}, we have
\begin{equation}\label{carleman-1}
\begin{aligned}
     &\|e^{\tau\psi} F_k\|_{L^{2}(\mbb{B}^{+}_{2})}+ \tau^{\frac{3}{2}}s^{2}\|\psi^{\frac{3}{2}}e^{\tau\psi}v_k\|_{L^{2}(\partial\mbb{B}^{+}_{2}\cap \{x  |  x_{n} = 0\})} + \tau^{\frac{1}{2}}\|e^{\tau\psi}g_k\|_{L^{2}(\partial\mbb{B}^{+}_{2}\cap \{x  |  x_{n} = 0\})} \\
   & + \tau^{\frac{1}{2}}s\|\psi^{\frac{1}{2}}e^{\tau\psi}\nabla' v_k\|_{L^{2}(\partial\mbb{B}^{+}_{2}\cap \{x  |  x_{n} = 0\})}
    \\
    &\geq C_{0}\tau^{\frac{3}{2}}s^{2}\|\psi^{\frac{3}{2}}e^{\tau\psi}v_k\|_{L^{2}(\mbb{B}^{+}_{2})} + C_{0}\tau^{\frac{1}{2}}s\|\psi^{\frac{1}{2}}e^{\tau\psi}\nabla v_k\|_{L^{2}(\mbb{B}^{+}_{2})}.
    \end{aligned}
\end{equation}
It is true that
\begin{align}
  \|v_k-v\|_{H^\frac{3}{2}(\mbb{B}^{+}_{2})} \leq  C (\|F-F_k\|_{L^2( \mbb{B}^{+}_{2})}+ \|g-g_k\|_{L^2(\partial \mbb{B}^{+}_{2}\cap \{x  |  x_{n} = 0\} )}). 
  \label{limit}
\end{align}
Thanks to (\ref{limit}), by the trace inequality (\ref{trace}) and smoothness of $\psi$ in  $\mbb{B}^{+}_{2}$, we can pass the limit in (\ref{carleman-1}). Thus, we obtain the Carleman estimate (\ref{carleman}).
\end{proof}

\section{Appendix}
In this appendix, we present the results of a quantitative Caccioppoli inequality and a $H^{\frac{3}{2}}$ regularity estimate for elliptic equations with Neumann boundary conditions. 
We get rid of  the boundary term  in the following quantitative Caccioppoli inequality.
\begin{lemma}\label{corollary Cac}
    Suppose $u$ is a solution of \eqref{bound PDE} and $\cball{r}^{+} \subset \cball{R}^{+} \subset \cball{1/2}^{+}$. Then
\begin{equation}\label{caccioppoli no bdry}
    \|\nabla u \|_{L^{2}(\mbb{B}^+_{r})}  \leq  \frac{C(1+\|H\|^{\frac{1}{2}}_{L^{\infty}} + \|h\|_{L^{\infty}})}{R-r}\|u\|_{L^{2}(\mbb{B}^{+}_{R})},
\end{equation}
where $C = C(g)$.
\end{lemma}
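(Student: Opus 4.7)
The plan is a standard Caccioppoli argument with one extra ingredient: we absorb the boundary contribution $\int_{\partial} \eta^2 h u^2$ by writing it as a bulk integral via the divergence theorem, then using Young's inequality to move a gradient piece back to the left-hand side.

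First, I would fix a smooth radial cutoff $\eta \in C^{\infty}_{0}(\mathbb{B}^{+}_{R})$ with $\eta \equiv 1$ on $\mathbb{B}^{+}_{r}$, $0 \leq \eta \leq 1$, and $|\nabla \eta| \leq C/(R-r)$. Multiplying the bulk equation $\Delta_g u = Hu$ by $\eta^2 u$, integrating over $\mathbb{B}^{+}_{R}$, and integrating by parts (picking up a boundary term on $\{x_n = 0\}$ where $\partial u/\partial \nu = hu$), I obtain the identity
\begin{equation*}
    \int \eta^{2}|\nabla u|^{2}\, dV_g = -\int \eta^{2} H u^{2}\, dV_g - 2\int \eta u\, \nabla \eta \cdot \nabla u\, dV_g + \int_{\{x_n=0\}} \eta^{2} h u^{2}\, dS_g.
\end{equation*}
The cross term is handled by Young's inequality in the usual way: $|2\int \eta u \nabla \eta \cdot \nabla u| \leq \tfrac{1}{4}\int \eta^{2}|\nabla u|^{2} + C\int u^{2}|\nabla \eta|^{2}$, so one quarter of the gradient is absorbed into the left-hand side.

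The main step, and the slight obstacle, is to control $\|h\|_{L^\infty}\int_{\{x_n=0\}} \eta^2 u^2\, dS_g$ in terms of bulk quantities. I would apply the divergence theorem in the $x_n$-direction:
\begin{equation*}
    \int_{\{x_n=0\}} \eta^{2}u^{2}\, dS \leq \int_{\mathbb{B}^{+}_{R}} |\partial_n(\eta^{2}u^{2})|\, dx \leq \frac{C}{R-r}\|u\|_{L^{2}(\mathbb{B}^{+}_{R})}^{2} + 2\|\eta u\|_{L^{2}}\|\eta \nabla u\|_{L^{2}}.
\end{equation*}
Multiplying by $\|h\|_{L^\infty}$ and applying Young's inequality to the product term with a small parameter $\varepsilon$ gives
\begin{equation*}
    \|h\|_{L^\infty}\int_{\{x_n=0\}} \eta^{2}u^{2}\, dS \leq \frac{C\|h\|_{L^\infty}}{R-r}\|u\|_{L^{2}(\mathbb{B}^{+}_{R})}^{2} + \tfrac{1}{4}\|\eta \nabla u\|_{L^{2}}^{2} + C\|h\|_{L^\infty}^{2}\|u\|_{L^{2}(\mathbb{B}^{+}_{R})}^{2},
\end{equation*}
and the second term on the right is absorbed into the left-hand side of the Caccioppoli identity.

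Combining all estimates and using $R - r \leq 1$ to consolidate the lower-order factors of $(R-r)^{-1}$, I get
\begin{equation*}
    \tfrac{1}{2}\int \eta^{2}|\nabla u|^{2}\, dV_g \leq \frac{C\bigl(1+\|H\|_{L^\infty}+\|h\|_{L^\infty}+\|h\|_{L^\infty}^{2}\bigr)}{(R-r)^{2}}\|u\|_{L^{2}(\mathbb{B}^{+}_{R})}^{2}.
\end{equation*}
Since $\eta \equiv 1$ on $\mathbb{B}^{+}_{r}$ and $1+\|H\|_{L^\infty}+\|h\|_{L^\infty}+\|h\|_{L^\infty}^{2} \leq C(1+\|H\|_{L^\infty}^{1/2}+\|h\|_{L^\infty})^{2}$, taking square roots yields \eqref{caccioppoli no bdry}. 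The only nonroutine point is the $\|h\|_{L^\infty}^2$ factor, which is the reason this exponent ultimately appears in the main doubling estimate.
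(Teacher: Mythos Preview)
Your argument is correct and follows essentially the same route as the paper. The only cosmetic difference is in how the boundary term $\|h\|_{L^\infty}\int_{\{x_n=0\}}\eta^2 u^2\,dS$ is converted into bulk terms: the paper invokes a parametrized trace inequality (with parameter later set to $\alpha=\|h\|_{L^\infty}$), whereas you obtain the same thing directly from the fundamental theorem of calculus in the $x_n$-direction and then apply Young's inequality with a small parameter; both lead to the same $\|h\|_{L^\infty}^2$ contribution and the same absorption of the gradient piece.
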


\begin{proof}

Take $\phi = \eta^{2} u$ where $\eta = 0$ in $\cball{1/2}^{+} \setminus \cball{R}^{+}$, $\eta = 1$ in $\cball{r}^{+}$ and $|\nabla \eta| \leq \frac{C}{R-r}$. 
  Since $u$ is a solution of \eqref{bound PDE}, we can multiply \eqref{bound PDE} by $\phi$ and integrate over $\cball{1/2}^{+}$ to get
  $$
2\int_{\cball{1/2}^{+}} g^{ij}D_{j}u D_{i}\eta u \eta dx + \int_{\cball{1/2}^{+}} g^{ij}D_{j}u D_{i}u \eta^2 dx = \int_{\partial \cball{1/2}^{+}\cap\{x|x_n=0\}}  \frac{\partial u}{\partial \nu}\eta^{2} u dS  + \int_{\cball{1/2}^{+}}Hu^2 \eta^{2}  dx.
$$
By the properties of $\eta$ and $g$, we can see that
\begin{equation}\label{cac p1}
2 \int_{\cball{R}^{+}} g^{ij}D_{j}u D_{i}\eta u \eta dx + \Lambda^{-1}\int_{\cball{R}^{+}} |\nabla u|^2 \eta^{2} dx \leq  \int_{\lball{R}} hu^2 \eta^2  dS  + \int_{\cball{R}^{+}}Hu^2\eta^2  dx,
\end{equation}
where $\lball{R}\subset \partial \cball{1/2}^{+}\cap\{x|x_n=0\}$ and $\Lambda^{-1} |\xi|^2\leq g^{ij} \xi_i \xi_j$. By Cauchy-Schwartz inequality and Young's inequality, we can see that
\begin{equation}\label{cac p2}
 \int_{\cball{R}^{+}}g^{ij} D_{j}u D_{i}\eta u \eta dx \leq \varepsilon \int_{\cball{R}^{+}} |\nabla u|^{2} \eta^{2} dx + \frac{C}{\varepsilon} \int_{\cball{R}^{+}} |\nabla \eta|^{2} u^{2} dx
\end{equation}
for any $\varepsilon > 0$. Combining \eqref{cac p1} and \eqref{cac p2}, and taking $\varepsilon = \frac{1}{4\Lambda}$, we get
\begin{equation}\label{cac p3}
\int_{\cball{R}^{+}} |\nabla u|^2 \eta^{2} dx \leq  C\bigg(\int_{\cball{R}^{+}} |\nabla \eta|^{2} u^{2} dx + \|h\|_{L^{\infty}}\int_{\lball{R}} u^2 \eta^2  dS  + \|H\|_{L^{\infty}}\int_{\cball{R}^{+}}u^2\eta^2  dx\bigg).
\end{equation}
Applying the trace inequality (\ref{trace est Zhu})  to the boundary term gives that
\begin{align*}
 \|h\|_{L^{\infty}}\int_{\lball{R}} u^2 \eta^2  dS\leq   \|h\|_{L^{\infty}} \big(\alpha \int_{\cball{R}^{+}}u^2\eta^2  dx + 2\alpha^{-1}(\int_{\cball{R}^{+}}|\nabla u| u\eta^2  dx + \int_{\cball{R}^{+}}u^2 |\nabla \eta|\eta dx) \big)
\end{align*}
for any $\alpha>0$.
Taking $\alpha = \|h\|_{L^{\infty}}$ and applying Young's inequality, we have
\begin{equation}\label{cac p4}
\begin{aligned}
\int_{\cball{R}^{+}} |\nabla u|^2 \eta^{2} dx &\leq  C\bigg(\int_{\cball{R}^{+}} |\nabla \eta|^{2} u^{2} dx +  \|H\|_{L^{\infty}}\int_{\cball{R}^{+}}u^2\eta^2  dx \\
&+(\|h\|_{L^{\infty}}^2+1) \int_{\cball{R}^{+}}  \eta^{2} u^{2} dx
\bigg).
\end{aligned}
\end{equation}
It follows from (\ref{cac p3}), and  the properties of $\eta$ and $\nabla \eta$ that
$$
\int_{\cball{r}^{+}} |\nabla u|^2  dx \leq  C\bigg(\frac{1}{(R-r)^{2}}\int_{\cball{R}^{+}} u^{2} dx + (\|h\|^2_{L^{\infty}}+1)\int_{\cball{R}^+} u^2  dS  + \|H\|_{L^{\infty}}\int_{\cball{R}^{+}}u^2  dx\bigg).
$$
Thus, we arrive at the desired results in the lemma.
\end{proof}
The next lemma is on the regularity of the elliptic equation with Neumann boundary conditions
\begin{equation}
     \begin{cases} 
     \label{regu-n}
      \Delta u = f \quad & x\in \Omega, \\
      \frac{\partial u}{\partial \nu} = g & x\in \partial\Omega,
   \end{cases}
\end{equation}
where $f\in L^2(\Omega)$ and $g\in L^2(\partial\Omega)$ with compatible  condition $\int_{\Omega} f=\int_{\partial\Omega} g $,
and $\Omega$ is a $C^2$ domain. Actually we can assume $\Omega$ is a Lipschtiz domain in the proof below. We can not find the exact literature for the following results which should be known by experts. We briefly show its proof for the completeness of presentation.
\begin{lemma}\label{lem-re}
   There exists a unique solution $u$ in (\ref{regu-n}), up to an additive constant, such that
\begin{align}
    \|u\|_{H^\frac{3}{2}(\Omega)}\leq C (\|f\|_{L^2( \Omega)}+ \|g\|_{L^2(\partial \Omega)}).
    \label{fff}
\end{align}
\end{lemma}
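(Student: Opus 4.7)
The plan is to first establish existence and uniqueness by a standard variational argument, then upgrade to $H^{3/2}$ regularity by interpolating between two endpoint estimates.

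For existence and uniqueness, I would apply the Lax--Milgram theorem on the quotient space $H^{1}(\Omega)/\mathbb{R}$. The Dirichlet form $a(u,v) = \int_\Omega \nabla u \cdot \nabla v\,dx$ is coercive on this space by the Poincar\'e--Wirtinger inequality, and the linear functional $\ell(\phi) = -\int_\Omega f\phi\,dx + \int_{\partial\Omega} g\phi\,dS$ is continuous on $H^{1}(\Omega)$ by Cauchy--Schwarz and the trace theorem. The compatibility condition $\int_\Omega f = \int_{\partial\Omega} g$ guarantees that $\ell$ annihilates constants, so it descends to the quotient. This produces a unique weak solution (up to an additive constant) with $\|u\|_{H^{1}(\Omega)} \le C(\|f\|_{L^{2}(\Omega)} + \|g\|_{L^{2}(\partial\Omega)})$.

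To eliminate $f$, I would extend $f$ to a compactly supported $\tilde f \in L^{2}(\mathbb{R}^n)$ and set $v = N \ast \tilde f$, the Newtonian potential; Calder\'on--Zygmund theory yields $v \in H^{2}_{loc}(\mathbb{R}^n)$ with $\|v\|_{H^{2}(\Omega)} \le C\|f\|_{L^{2}(\Omega)}$ and $\Delta v = f$ in $\Omega$. Setting $w = u - v$, the function $w$ is harmonic with Neumann data $\tilde g := g - \partial_\nu v \in L^{2}(\partial\Omega)$ whose norm is controlled by $C(\|f\|_{L^{2}(\Omega)} + \|g\|_{L^{2}(\partial\Omega)})$, using the continuous trace $H^{1}(\Omega) \to L^{2}(\partial\Omega)$ applied to $\nabla v$. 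The problem thus reduces to proving $\|w\|_{H^{3/2}(\Omega)} \le C\|\tilde g\|_{L^{2}(\partial\Omega)}$ for the harmonic Neumann problem.

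The heart of the argument is an interpolation estimate for the harmonic Neumann-to-solution operator $T \colon \tilde g \mapsto w$. I would establish two endpoint bounds: (i) if $\tilde g \in H^{1/2}(\partial\Omega)$, then $w \in H^{2}(\Omega)$ with $\|w\|_{H^{2}} \le C\|\tilde g\|_{H^{1/2}}$, by the classical $H^{2}$ boundary regularity for the $C^{2}$ domain $\Omega$; (ii) if $\tilde g \in H^{-1/2}(\partial\Omega)$, then $w \in H^{1}(\Omega)$ with $\|w\|_{H^{1}} \le C\|\tilde g\|_{H^{-1/2}}$, which is Lax--Milgram again, now interpreting the boundary pairing as the duality $\langle \tilde g,\phi\rangle_{H^{-1/2},H^{1/2}}$ applied to the trace of $\phi$. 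Using the interpolation identities $[H^{-1/2}(\partial\Omega),H^{1/2}(\partial\Omega)]_{1/2} = L^{2}(\partial\Omega)$ and $[H^{1}(\Omega),H^{2}(\Omega)]_{1/2} = H^{3/2}(\Omega)$, complex interpolation yields boundedness of $T \colon L^{2}(\partial\Omega) \to H^{3/2}(\Omega)$, which is exactly \eqref{fff}.

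The main obstacle is justifying the interpolation identities at the half-integer Sobolev scale for a merely $C^{2}$ (or Lipschitz) boundary, and verifying that the two endpoint solutions coincide where both constructions overlap so that $T$ legitimately interpolates. An alternative route, which sidesteps interpolation, is to use boundary layer potentials in the spirit of Jerison--Kenig \cite{JK81}: represent $w$ as a single-layer potential with $L^{2}$ density on $\partial\Omega$ and invoke non-tangential maximal function estimates together with square function bounds to recover the claimed $H^{3/2}$ regularity in the bulk.
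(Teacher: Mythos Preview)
Your proposal is correct. The reduction step---subtracting off a particular solution of $\Delta v = f$ to reduce to the harmonic Neumann problem---is essentially the same as the paper's: the paper solves a Dirichlet problem on an enlarged smooth domain $\Omega_0 \Supset \Omega$ instead of using the Newtonian potential, but the effect and the resulting $H^2$ control are identical. The genuine difference is in how the core estimate $\|w\|_{H^{3/2}(\Omega)} \le C\|\tilde g\|_{L^2(\partial\Omega)}$ for the harmonic Neumann problem is obtained. The paper treats this as a black box, citing Jerison--Kenig \cite{JK81} directly (which is your ``alternative route''). Your primary route---complex interpolation of the solution operator between the endpoints $H^{-1/2}(\partial\Omega)\to H^1(\Omega)$ and $H^{1/2}(\partial\Omega)\to H^2(\Omega)$---is a genuinely different and more self-contained argument. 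The technical concern you flag about half-integer interpolation is not an obstacle for a $C^2$ domain: after locally flattening the boundary, the standard interpolation identities for Sobolev spaces on $\mathbb{R}^{n-1}$ and on $\mathbb{R}^n_+$ apply, and uniqueness (up to constants) guarantees the two endpoint constructions agree so that $T$ interpolates cleanly. Your approach trades dependence on the deep layer-potential theory of \cite{JK81} for dependence on classical $H^2$ boundary regularity and abstract interpolation; either is acceptable, and yours is arguably more transparent here.
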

\begin{proof}
We first consider the Neumann boundary value problem
\begin{equation*}
     \begin{cases} 
      \Delta v = 0 \quad & x\in \Omega, \\
      \frac{\partial v}{\partial \nu} = g_1 & x\in \partial\Omega
   \end{cases}
\end{equation*}
with the compatible continuation $\int_{\partial \Omega} g_1=0$. Up to an additive constant, it holds that in \cite{JK81} that
\begin{align}
    \|v\|_{H^\frac{3}{2}(\Omega)}\leq C \|g_1\|_{L^2(\partial \Omega)}.
    \label{j-k}
\end{align}

 We extend $f$ to be a function $f_0$ in a large smooth region $\Omega_0$ such that $\Omega\Subset \Omega_0$, $f_0=f$ in $\Omega$, $\|f_0\|_{L^2(\Omega_0)}\leq C\|f\|_{L^2(\Omega)}$. By the standard theory from elliptic equation, there exists a unique solution $F_0$ for the following equation 
\begin{equation*}
     \begin{cases} 
      \Delta F_0 = f_0 \quad & x\in \Omega_0, \\
      F_0 = 0 & x\in \partial\Omega_0,
   \end{cases}
\end{equation*}
and $\|F_0\|_{H^2(\Omega_0)}\leq C\|f_0\|_{L^2(\Omega)}\leq C\|f\|_{L^2(\Omega)}$. Then $u-F_0$ satisfies the following equation 
\begin{equation*}
     \begin{cases} 
      \Delta (u-F_0 )= 0 \quad & x\in \Omega, \\
      \frac{\partial (u-F_0)}{\partial \nu} = g-\frac{\partial F_0}{\partial \nu} & x\in \partial\Omega.
   \end{cases}
\end{equation*}
The standard trace inequality holds for any $f_1\in H^\frac{3}{2}(\Omega) $,
\begin{align}
    \|f_1\|_{H^1(\partial \Omega)}\leq \|f_1\|_{H^\frac{3}{2}(\Omega)}.
    \label{trace}
\end{align}
It follows (\ref{j-k}) and (\ref{trace}) that
\begin{align*}
   \|u\|_{H^\frac{3}{2}(\Omega)}&\leq  \|F_0\|_{H^\frac{3}{2}( \Omega)}+ C\| g-\frac{\partial F_0}{\partial \nu}\|_{L^2(\partial \Omega)}\nonumber\\
   &\leq  C\|F_0\|_{H^\frac{3}{2}( \Omega)}+ C\| g\|_{L^2(\partial \Omega)}) \leq  C (\|f\|_{L^2( \Omega)}+ \|g\|_{L^2(\partial \Omega)}).
\end{align*}

\end{proof}

\bibliography{Mybib}
\bibliographystyle{abbrv}

\end{document}